\documentclass[a4paper,11pt]{amsart}
\usepackage{color}
\usepackage{cite}
\usepackage{amssymb,amsmath,amscd}
\usepackage[english]{babel}
\usepackage[latin1]{inputenc}

\newtheorem{prop}{Proposition}[section]
\newtheorem{thm}[prop]{Theorem}
\newtheorem{cor}[prop]{Corollary}

\theoremstyle{definition}
\newtheorem{rem}[prop]{Remark}

\theoremstyle{definition}

\def\N{{\mathbb N}}

\newcommand{\calK}{{\mathcal K}}
\newcommand{\calW}{{\mathcal W}}
\newcommand{\calL}{{\mathcal L}}

\newcommand{\calP}{{\mathcal P}}
\newcommand{\calF}{{\mathcal F}}
\newcommand{\calA}{{\mathcal A}}

\newcommand{\calN}{{\mathcal N}}
\newcommand{\calB}{{\mathcal B}}
\newcommand{\calI}{{\mathcal I}}
\newcommand{\calU}{{\mathcal U}}
\newcommand{\calV}{{\mathcal V}}

\newcommand{\bs}[1]{\boldsymbol{#1}}
\newcommand{\pr}{{(p,r)}}

\newcommand{\ra}{\rightarrow}

\newcommand{\e}{\varepsilon}
\newcommand{\xast}{x^{\ast}}

\newcommand{\past}{p^{\ast}}
\newcommand{\rast}{r^{\ast}}

\newcommand{\Xast}{X^{\ast}}

\newcommand{\norm}[1]{\left\lVert #1 \right\rVert}

\theoremstyle{definition}

\renewcommand{\theenumi}{(\alph{enumi})}
\renewcommand{\labelenumi}{\theenumi}

\begin{document}

\title[On $(p,r)$-null sequences and their relatives]{On $(p,r)$-null sequences and their relatives}

\author{Kati Ain}
\address{Kati Ain, Faculty of Mathematics and Computer Science, Tartu University, J. Liivi 2, 50409 Tartu, Estonia}
\email{kati.ain@ut.ee}
\author{Eve Oja}
\address{Eve Oja, Faculty of Mathematics and Computer Science, Tartu University, J. Liivi 2, 50409 Tartu, Estonia; Estonian Academy of Sciences, Kohtu 6, 10130 Tallinn, Estonia} 
\email{eve.oja@ut.ee}
\dedicatory{Dedicated to Professor Albrecht Pietsch on his eightieth birthday}
\keywords{Banach spaces, $\pr$-null sequences, relatively $(p,r)$-compact sets, $(p,r)$-compact operators, operator ideals.}
\thanks{The research was partially supported by
Estonian Science Foundation Grant 8976 and by institutional research funding IUT20-57 of the Estonian Ministry of Education and Research.}
\subjclass[2010]{Primary: 46B50. Secondary: 46B20, 46B45, 47B07, 47B10, 47L20.}

\begin{abstract}
Let $1\leq p < \infty$ and $1\leq r \leq \past$, where $\past$ is the conjugate index of $p$. We prove an omnibus theorem, which provides numerous equivalences for a sequence $(x_n)$ in a Banach space $X$ to be a $\pr$-null sequence. One of them is that $(x_n)$  is $\pr$-null if and only if $(x_n)$ is null and relatively $\pr$-compact. This equivalence is known in the ``limit'' case when $r=\past$, the case of the $p$-null sequence and $p$-compactness. Our approach is more direct and easier than those applied for the proof of the latter result. We apply it also to characterize the unconditional and weak versions of $\pr$-null sequences.
\end{abstract}

\maketitle

\begin{section}{Introduction}
Let $X$ be a Banach space and let $c_0(X)$ denote the space of null sequences in $X$. Recently, Delgado and Pi\~neiro \cite{PD} introduced and studied an interesting class of $p$-null sequences, where $p\geq 1$, which is a linear subspace of $c_0(X)$. In \cite{O-JF}, it was proved that the space of $p$-null sequences in $X$ can be identified with the Chevet--Saphar tensor product $c_0 \hat{\otimes}_{d_p}X$.

On the other hand, there is a strong form of compactness, the $p$-compact\-ness, that has been studied during the last dozen  years in the literature (see, e.g., \cite{ALO, AMR, CK, DOPS, DPS1, GLT, P2, SK1}). The $p$-null sequences can be characterized via the $p$-compactness as follows. (The definitions will be given in Section 2.)

\begin{thm}[Delgado--Pi\~neiro--Oja]\label{DPO}
Let $1\leq p < \infty$. A sequence $(x_n)$ in a Banach space $X$ is $p$-null if and only if $(x_n)$ is null and relatively $p$-compact.
\end{thm}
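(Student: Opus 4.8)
Throughout I would use the identification from \cite{O-JF} of the space of $p$-null sequences with $c_0\hat{\otimes}_{d_p}X$; unwinding the definition of $d_p$, this says that $(x_n)$ is $p$-null exactly when it admits a representation $x_n=\sum_i\lambda^{(i)}_n y_i$ ($n\in\N$) with $(y_i)\in\ell_p(X)$, with each $\lambda^{(i)}=(\lambda^{(i)}_n)_n$ lying in $c_0$, and with $(\lambda^{(i)})_i$ weakly $\past$-summable in $c_0$, i.e. $\sup_{\mu\in B_{\ell_1}}\bigl(\sum_i|\langle\mu,\lambda^{(i)}\rangle|^{\past}\bigr)^{1/\past}<\infty$. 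Recall also that a set is relatively $p$-compact precisely when it is contained in $\{\sum_k\alpha_k z_k:(\alpha_k)\in B_{\ell_{\past}}\}$ for some $(z_k)\in\ell_p(X)$. The plan is to prove the two implications separately.

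\emph{Necessity.} Given a representation of $(x_n)$ as above, $(x_n)\in c_0(X)$ is null; pairing the weak $\past$-summability with the coordinate functional $e_n^{\ast}\in\ell_1=c_0^{\ast}$ gives $\|(\lambda^{(i)}_n)_i\|_{\ell_{\past}}\le C$ for every $n$, whence $x_n\in C\{\sum_i\alpha_i y_i:(\alpha_i)\in B_{\ell_{\past}}\}$ with $(y_i)\in\ell_p(X)$, so $(x_n)$ is relatively $p$-compact. (If one prefers to see $x_n\to 0$ without invoking $c_0(X)$: $\|x_n\|\le\|(\lambda^{(i)}_n)_i\|_{\ell_{\past}}\,\|(y_i)\|_{\ell_p(X)}$ controls the whole sum, the tails $\sum_{i>I}$ are uniformly small, and each $\lambda^{(i)}\in c_0$.)

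\emph{Sufficiency.} Start from $(x_n)$ null together with a representation $x_n=\sum_k a^{(n)}_k z_k$ where $(z_k)\in\ell_p(X)$ and $(a^{(n)}_k)_k\in B_{\ell_{\past}}$ for every $n$. The obvious candidate $y_k:=z_k$, $\lambda^{(k)}_n:=a^{(n)}_k$ already gives $(y_k)\in\ell_p(X)$, and Minkowski's inequality in $\ell_{\past}$ shows that $(\lambda^{(k)})_k$ is weakly $\past$-summable with constant $\le 1$; the only property that can fail is that each $\lambda^{(k)}=(a^{(n)}_k)_n$ lies in $c_0$, since a relatively $p$-compact representation need not record the null behaviour of $(x_n)$ in its coefficients. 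To repair this I would use the quantitative fact, isolated below, that although each tail $\{x_n:n\ge m\}$ is trivially relatively $p$-compact, its $p$-compactness measure $m_p(\{x_n:n\ge m\})$ — the infimum of $\|(z_k)\|_{\ell_p(X)}$ over generating sequences $(z_k)\in\ell_p(X)$ — tends to $0$ as $m\to\infty$. Granting it, choose $m_1<m_2<\cdots$ with $m_p(\{x_n:n\ge m_j\})<2^{-j}$; for each $j$ pick $(z^{(j)}_k)_k\in\ell_p(X)$ with $\|(z^{(j)}_k)_k\|_{\ell_p(X)}<2^{-j}$ generating the finite set $\{x_n:m_j\le n<m_{j+1}\}$; take a finitely supported family generating $\{x_n:n<m_1\}$; and concatenate all these families into a single sequence $(y_i)$. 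Then $(y_i)\in\ell_p(X)$ because $\sum_j 2^{-jp}<\infty$; in the resulting representation each generator $y_i$ is used by only the finitely many $x_n$ lying in the block that owns it, so every $\lambda^{(i)}$ is finitely supported, hence in $c_0$; and weak $\past$-summability of $(\lambda^{(i)})_i$ follows once more from Minkowski's inequality (with constant $\le 1$). Thus $(x_n)$ is $p$-null.

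The main obstacle is exactly this quantitative lemma: a relatively $p$-compact set of small norm has small $p$-compactness measure, i.e. $m_p\bigl(\{\sum_k\alpha_k z_k:(\alpha_k)\in B_{\ell_{\past}}\}\cap\delta B_X\bigr)\to 0$ as $\delta\to 0$ for every $(z_k)\in\ell_p(X)$; applied with $\delta=\delta_m:=\sup_{n\ge m}\|x_n\|\to 0$ this gives the assertion used above. Here the naive head--tail estimate (write $\sum_k\alpha_k z_k$ as a head in $\mathrm{span}(z_1,\dots,z_N)$ of norm $\le\delta+\|(z_k)_{k>N}\|_{\ell_p(X)}$ plus a tail of small $p$-measure) loses a multiplicative constant, since the $p$-compactness measure of the unit ball of an $N$-dimensional subspace grows with $N$, and it does not by itself close the estimate; one must exploit the bound $\|\sum_k\alpha_k z_k\|\le\delta$ more efficiently, in effect comparing $\{\sum_k\alpha_k z_k:(\alpha_k)\in B_{\ell_{\past}}\}$ with $\delta B_X$ coordinatewise. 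In the framework of the paper's omnibus theorem this is obtained by approximating the generating operator $\ell_{\past}\to X$ by finite-rank operators of controlled norm and reducing to the finitely supported case, in which the $p$-null norm of a sequence is precisely the $p$-compactness measure of its range. I expect this lemma — and essentially nothing else — to carry the weight of the proof; the remaining Hölder and Minkowski estimates and the blocking bookkeeping are routine.
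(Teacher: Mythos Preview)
Your necessity argument is fine and essentially matches the paper's direct proof of (a)$\Rightarrow$(b) in Theorem~\ref{omni}. The gap is in sufficiency. Your ``quantitative lemma'' --- that $m_p(\{x_n:n\ge m\})\to 0$ for every null, relatively $p$-compact sequence --- is not a reduction but a restatement of the goal: by definition $(x_n)$ is $p$-null precisely when for each $\varepsilon>0$ some tail lies in a $p$-convex hull of $\ell_p$-norm at most $\varepsilon$. You correctly diagnose that the head--tail splitting of $\Phi_{(z_k)}$ does not close the estimate, but the proposed repair (``approximate the generating operator by finite-rank operators and reduce to the finitely supported case'') is not carried out and, as described, runs into the same dimensional obstruction you just flagged. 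No mechanism is supplied for converting the norm bound $\|x_n\|\le\delta$ into a \emph{small} generating family in $\ell_p(X)$; the blocking bookkeeping you set up is routine only once that lemma is in hand.

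The paper bypasses this quantitative question entirely and does not use the tensor-product description from \cite{O-JF}. It first identifies relatively $p$-compact sets with $\calK_p$-compact sets in the Carl--Stephani sense (Proposition~\ref{prop3.1}, via minimality of $\calN_{(p,1,p)}$), and then invokes the Lassalle--Turco Theorem~\ref{LT}: a null sequence lying in an $\calA$-compact set is $\calA$-null. This produces a factorization $x_n=Ty_n$ with $T\in\calK_p(Y,X)$ and $y_n\to 0$ in $Y$; choosing $(w_k)\in\ell_p(X)$ with $T(B_Y)\subset p\textnormal{-conv}(w_k)$ then gives $x_n\in\|y_n\|\, p\textnormal{-conv}(w_k)$, and $\|y_n\|\to 0$ supplies exactly the smallness your $\delta\to 0$ could not. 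The idea missing from your proposal is this factorization through a single $p$-compact operator, which transfers the null behaviour to a space where it controls the $p$-convex-hull radius automatically; your guess that the paper proceeds by finite-rank approximation of $\Phi_{(z_k)}$ is not what actually happens.
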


Theorem \ref{DPO} was discovered in \cite[Proposition 2.6]{PD} and proved in the case of Banach spaces enjoying a version of the approximation property depending on $p$ (by \cite{O-JM}, this version of the approximation property coincides with the classical one for the closed subspaces of $L_p(\mu)$-spaces). For arbitrary Banach spaces, Theorem \ref{DPO} was proved in \cite{O-JF}.

The proof of Theorem \ref{DPO} in \cite{O-JF} relies on the above-mentioned description of the space of $p$-null sequences as a Chevet--Saphar tensor product. Very recently, an alternative natural proof was found by Lassalle and Turco \cite{LT} who rediscovered and applied a powerful theory due to Carl and Stephani \cite{CS} from 1984. Key concepts of the Carl--Stephani theory are $\calA$-null sequences and $\calA$-compact sets in Banach spaces, which are defined for an arbitrary operator ideal $\calA$. Lassalle--Turco's proof in \cite{LT} relies on the following operator ideal version of Theorem \ref{DPO}, deduced from the Carl--Stephani theory in \cite[Proposition 1.4]{LT}.

\begin{thm}[Lassalle--Turco]\label{LT}
Let $\calA$ be an operator ideal. A sequence $(x_n)$ in a Banach space $X$ is $\calA$-null if and only if $(x_n)$ is null and  $\calA$-compact.
\end{thm}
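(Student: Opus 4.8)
The plan is to prove the two implications separately; the forward one is routine, while the reverse one is where the real work sits.

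For the forward implication, suppose $(x_n)$ is $\calA$-null, so that $x_n = T z_n$ for some Banach space $Z$, some $T \in \calA(Z,X)$, and some null sequence $(z_n)$ in $Z$. Since operators belonging to an operator ideal are bounded, $x_n = T z_n \to T0 = 0$, so $(x_n)$ is null. Moreover $K := \{z_n : n \in \N\} \cup \{0\}$ is compact (a convergent sequence together with its limit) and $\{x_n : n \in \N\} \subseteq T(K)$, so $\{x_n\}$ is $\calA$-compact.

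For the reverse implication I would proceed as follows. Assume $(x_n)$ is null and $\{x_n : n \in \N\}$ is $\calA$-compact, and fix a Banach space $W$, an operator $S \in \calA(W,X)$, and a compact set $L \subseteq W$ with $\{x_n : n \in \N\} \subseteq S(L)$. As $S(L)$ is compact, hence closed, and $x_n \to 0$, we have $0 \in S(L)$, so $\ker S \cap L$ is a nonempty compact subset of $W$. For each $n$ pick $w_n \in L$ with $S w_n = x_n$. The obstacle — and the heart of the proof — is that $(w_n)$ need not be null; it merely lies in the compact set $L$. The key point that repairs this is that $\operatorname{dist}(w_n, \ker S \cap L) \to 0$: otherwise a subsequence $(w_{n_j})$ would stay at distance $\geq \e > 0$ from $\ker S \cap L$, yet, $L$ being compact, a further subsequence would converge to some $w_{\ast} \in L$; then $S w_{\ast} = \lim_j x_{n_j} = 0$, forcing $w_{\ast} \in \ker S \cap L$ and contradicting the distance bound. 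Choosing $k_n \in \ker S \cap L$ with $\|w_n - k_n\| \to 0$ and setting $z_n := w_n - k_n$, one gets a null sequence $(z_n)$ in $W$ with $S z_n = S w_n - S k_n = x_n$ for all $n$; hence $(x_n)$ is $\calA$-null, realized via $(W, S, (z_n))$.

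Thus the only delicate step is the one isolated above: a null sequence lifted through $S$ need not remain null, and it is rescued simply by subtracting its nearest component in $\ker S \cap L$. (Should the ambient definition of $\calA$-null sequence be formulated with the fixed model space $c_0$ rather than an arbitrary $Z$, I would first invoke the equivalence of the two formulations.)
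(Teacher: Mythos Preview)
Your argument is correct. The forward direction is routine, and in the reverse direction your observation that $\operatorname{dist}(w_n,\ker S\cap L)\to 0$ is exactly the right repair: the compactness of $L$ guarantees the limit point $w_*$ exists and lies in $\ker S\cap L$, and since $\ker S\cap L$ is compact the distance is attained, so your correcting sequence $(k_n)$ is available and $z_n=w_n-k_n$ does the job.

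The paper itself does not prove this theorem; it quotes it from Lassalle--Turco \cite{LT}, where it is deduced from the Carl--Stephani theory \cite{CS}. The method that theory suggests --- and which the paper does carry out explicitly for the \emph{weak} analogue, Proposition~\ref{prop4.4} --- is different from yours. There one passes to the quotient $Z=W/\ker S$ and uses the injective associate $\overline S$: on the compact image $q(L)$, the map $\overline S$ is a continuous bijection onto $S(L)$, hence a homeomorphism, so $z_n:=\overline S^{-1}(x_n)$ is null in $Z$ and $x_n=\overline S z_n$. This yields only $\calA^{\mathrm{sur}}$-null a priori, and one then invokes the Carl--Stephani fact that $\calA$-null and $\calA^{\mathrm{sur}}$-null sequences coincide. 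Your argument stays in $W$, avoids the quotient and the $\calA^{\mathrm{sur}}$ detour entirely, and lands on an operator in $\calA$ directly --- in fact the \emph{same} operator $S$ you started with. Conversely, the injective-associate route transfers verbatim to the weak topology (compact Hausdorff bijections are homeomorphisms), which is why the paper can use it for Proposition~\ref{prop4.4}; your distance argument relies on norm compactness of $L$ and would not survive that generalisation.
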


A starting point for the present article was the observation that, in the proof of Theorem \ref{DPO}, Theorem \ref{LT} could be used in a more efficient way than in \cite{LT}. In particular, the technical result \cite[Proposition 1.5]{LT} would not be needed in the proof. Even more, it is obtained for ``free'' as a by-product (see Remark \ref{R3.3}). Moreover, in that way, Theorem \ref{LT} can be applied to prove results similar to Theorem \ref{DPO} also in cases when the method of \cite{O-JF} cannot be applied. One of such cases is, for instance, the one that involves the recent concepts of $(p,r)$-compactness \cite{ALO}  and of $(p,r)$-null sequences \cite{AO1}.

In Section 3, we prove an omnibus theorem, Theorem \ref{omni}, which provides six equivalent properties  for a sequence in a Banach space to be a $\pr$-null sequence. For completeness, let us cite here the part of the omnibus Theorem \ref{omni} which directly corresponds to Theorem \ref{DPO}.

\begin{thm}\label{th1.3}
Let $1 \leq p < \infty$ and $1\leq r \leq \past$, where $\past$ denotes the conjugate index of $p$. A sequence $(x_n)$ in a Banach space $X$ is $\pr$-null if and only if $(x_n)$ is null and relatively $\pr$-compact.
\end{thm}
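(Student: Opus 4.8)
The plan is to obtain Theorem~\ref{th1.3} as a special case of the Lassalle--Turco theorem (Theorem~\ref{LT}) applied to the operator ideal $\calK_\pr$ of \emph{$\pr$-compact operators}, that is, those $T\colon Z\to X$ for which $T(B_Z)$ is relatively $\pr$-compact. For this it suffices to establish, for every Banach space $X$, two coincidences: (i)~a sequence $(x_n)$ in $X$ is $\pr$-null if and only if it is $\calK_\pr$-null in the sense of Carl--Stephani; and (ii)~a subset of $X$ is relatively $\pr$-compact if and only if it is $\calK_\pr$-compact in the sense of Carl--Stephani. Granting (i) and (ii), Theorem~\ref{LT} with $\calA=\calK_\pr$ states that $(x_n)$ is $\calK_\pr$-null if and only if it is null and $\calK_\pr$-compact; translating both notions through (i) and (ii) gives precisely Theorem~\ref{th1.3}.

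For (i), the implication ``$\pr$-null $\Rightarrow\calK_\pr$-null'' is immediate: if $x_n=\sum_k\alpha^{(n)}_k y_k$ with $(y_k)\in\ell_p(X)$ and $\|\alpha^{(n)}\|_{\ell_r}\to 0$, then the diagonal operator $D\colon(\lambda_k)\mapsto\sum_k\lambda_k y_k$ maps $\ell_r$ into $X$, is $\pr$-compact because $D(B_{\ell_r})=\{\sum_k\lambda_k y_k:\|(\lambda_k)\|_{\ell_r}\le 1\}$ is relatively $\pr$-compact (being generated by $(y_k)$), and $x_n=D\alpha^{(n)}$ with $(\alpha^{(n)})$ null in $\ell_r$. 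For the converse, let $x_n=Tz_n$ with $T\in\calK_\pr(Z,X)$ and $z_n\to 0$, and pick $(y_k)\in\ell_p(X)$ such that $T(B_Z)$ is contained in the corresponding set $D(B_{\ell_r})$. For $z_n\ne 0$, applying this to $z_n/\|z_n\|$ yields $T(z_n/\|z_n\|)=D\gamma^{(n)}$ with $\|\gamma^{(n)}\|_{\ell_r}\le 1$; hence $x_n=D\alpha^{(n)}$ with $\alpha^{(n)}:=\|z_n\|\,\gamma^{(n)}$, so that $\|\alpha^{(n)}\|_{\ell_r}\le\|z_n\|\to 0$ (taking $\alpha^{(n)}:=0$ when $z_n=0$), and $(x_n)$ is $\pr$-null. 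This short rescaling argument, in place of a separate technical result such as \cite[Proposition~1.5]{LT}, is what makes the present route more direct than those of \cite{O-JF} and \cite{LT}.

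For (ii), the implication ``$\calK_\pr$-compact $\Rightarrow$ relatively $\pr$-compact'' follows at once from the definition of a $\pr$-compact operator together with the stability of relative $\pr$-compactness under subsets and bounded rescalings; conversely, a relatively $\pr$-compact set $M$ is by definition contained in the set $D(B_{\ell_r})$ built, as above, from its generating $\ell_p(X)$-sequence, and $D\in\calK_\pr$, so $M$ is $\calK_\pr$-compact. Thus (ii) amounts to a reformulation of the definitions; the one point to watch is to align the Carl--Stephani notion of $\calK_\pr$-compact set with the concrete $\pr$-hull description exactly as in the references (in particular, whether it is phrased via images of the unit ball or via images of relatively compact sets), and this matching --- together with the verification of the non-obvious direction of (i), the rescaling step --- is where I expect the only real care to be needed; the substantive content of the theorem already resides in Theorem~\ref{LT}, which we take as given. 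With (i), (ii), and the fact that $\calK_\pr$ is an operator ideal in hand, Theorem~\ref{LT} applies and Theorem~\ref{th1.3} follows, the remaining observations (a $\pr$-null sequence is null and, having bounded coefficient sequences, relatively $\pr$-compact) being routine. Finally, since (i) and (ii) occur among the equivalences of the omnibus Theorem~\ref{omni}, the set-level identification of relatively $\pr$-compact sets with $\calK_\pr$-compact sets --- the counterpart of \cite[Proposition~1.5]{LT} --- falls out as part of the package, as recorded in Remark~\ref{R3.3}; and the same scheme applies verbatim to the unconditional and weak variants of $\pr$-null sequences treated later in the paper.
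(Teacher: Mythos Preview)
Your overall strategy---reducing to Theorem~\ref{LT} with $\calA=\calK_\pr$ via the two coincidences (i) and (ii)---is exactly the paper's route, and your rescaling argument for ``$\calK_\pr$-null $\Rightarrow$ $(p,r)$-null'' is precisely the paper's proof of (f)$\Rightarrow$(g) in Theorem~\ref{omni}. There are, however, two genuine gaps.

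First, your forward direction in (i) uses the wrong notion of $(p,r)$-null. You write ``if $x_n=\sum_k\alpha^{(n)}_k y_k$ with $(y_k)\in\ell_p(X)$ and $\|\alpha^{(n)}\|_{\ell_r}\to 0$'', but this is essentially the \emph{uniformly} $(p,r)$-null condition~(g), not the defining condition~(a). In the paper's definition the generating sequence $(z_k)\in\varepsilon B_{\ell_p(X)}$ varies with $\varepsilon$ and the coefficient sequences stay in $B_{\ell_r}$; no single $(y_k)$ and no $\ell_r$-null coefficients are given a priori. The equivalence (a)$\Leftrightarrow$(g) is part of what must be proved, so your argument here is circular. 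The paper avoids this by establishing (a)$\Rightarrow$(b) directly (the elementary computation you allude to at the end) and reaching $\calK_\pr$-null only afterwards, through~(b).

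Second, and more seriously, your converse in (ii) does not establish what is needed. Carl--Stephani $\calK_\pr$-compactness is membership in $\calK_\pr(\bs k)$: one must exhibit $K\subset T(F)$ with $F$ \emph{relatively compact}. Your argument produces $K\subset D(B_{\ell_r})$ with $D\in\calK_\pr$, but $B_{\ell_r}$ is merely bounded, so you have only shown $K\in\calK_\pr(\bs b)$. You flag this as ``the one point to watch'' yet leave it unproved, and it is exactly here that the real content lies: this is Proposition~\ref{prop3.1}, which the paper obtains from the minimality of the ideal $\calN_{(p,1,\rast)}$ (so that $\calN_{(p,1,\rast)}=\overline\calF\circ\calN_{(p,1,\rast)}\circ\overline\calF$, whence $\bs k_\pr=\calN_{(p,1,\rast)}(\bs b)=(\overline\calF\circ\calN_{(p,1,\rast)})(\overline\calF(\bs b))\subset\calK_\pr(\bs k)$). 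An elementary alternative is to choose $(\mu_k)\in c_0$ with $(x_k/\mu_k)\in\ell_p(X)$ and factor $\Phi_{(x_k)}=\Phi_{(x_k/\mu_k)}\circ D_\mu$ through the compact diagonal $D_\mu$ on $\ell_r$; either way, this step is not ``a reformulation of the definitions''. With it in hand the paper's cycle (a)$\Rightarrow$(b)$\Rightarrow$(d)$\Rightarrow$(f)$\Rightarrow$(g)$\Rightarrow$(a) closes, and your rescaling is the (f)$\Rightarrow$(g) link.
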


Let us remark that in the ``limit'' case $r=\past$, the $(p,\past)$-null and $(p,\past)$-compactness are precisely the $p$-null and $p$-compactness. This is, in fact, the only special case when Theorem \ref{th1.3} could be proved by the method in \cite{O-JF}. The reason is simple: the method in \cite{O-JF} uses the Hahn--Banach  theorem. But the $\pr$-context provides a suitable norm only if $r= \past$, and in all other cases merely quasi-norms are available. But, as well known, quasi-normed spaces do not enjoy the Hahn--Banach theorem.

The approach developed in Section 3 is applied in Section 4 to characterize the unconditional and weak versions of $\pr$-null sequences.

Our notation is standard. We consider Banach spaces over the same, either real or complex, field $\mathbb K$. The closed unit ball of a Banach space $X$ is denoted by $B_X$. 

We denote by $\calL$, $\calW$, $\calK$, and $\overline \calF$, respectively, the operator ideals  of bounded, weakly compact, compact, and approximable linear operators. We refer to Pietsch's book \cite{P} and the survey paper \cite{DJP} by Diestel, Jarchow, and Pietsch for the theory of operator ideals. Let us recall here only the definition of the operator ideal $\calA^\mathrm{sur}$, the \emph{surjective hull} of an operator ideal $\calA$ (see \cite[Section~2]{S73} and \cite[4.7.1]{P}). An operator $T\in \calL(Y,X)$ \emph{belongs to} $\calA^\mathrm{sur}(Y,X)$ if $Tq \in \calA(Z,X)$ for some surjection $q\in\calL (Z,Y)$. Obviously, $\calA \subset \calA^\mathrm{sur}$. If $\calA = \calA ^\mathrm{sur}$, then $\calA$ is called \emph{surjective}.

The Banach space of all absolutely $p$-summable sequences in $X$ is denoted by $\ell_p(X)$ and its norm by $\norm{\cdot}_p$. By $\ell_p ^w (X)$ we mean the Banach space of weakly $p$-summable sequences in $X$ with the norm $\norm{\cdot}_p^w$ (see, e.g., \cite[pp. 32--33]{DJT}). If $1\leq p \leq \infty$, then $\past$ denotes the conjugate index of $p$ (i.e., $1/p + 1/\past =1$ with the convention $ 1/ \infty =0$).

To simplify notation, we shall use the symbol $\ell_\infty$ instead of $c_0$ and, more generally, $\ell_\infty (X)$ instead of $c_0(X)$ if $X$ is a Banach space.
\end{section}

\begin{section}{Basic concepts and notation}
{\bf 2.1. The $\pr$-compactness of sets and operators.} Let $X$ be a Banach space. Let $1\leq p \leq \infty$ and $1\leq r \leq \past$. We define the \emph{$(p,r)$-convex hull} of a sequence $(x_k)\in \ell_p(X)$ by 
\[
(p,r) \mbox{-conv}(x_k)=\left\{ \sum_{k=1}^\infty a_k x_k : (a_k)\in B_{\ell_r} \right\}.
\]

As in \cite{ALO}, we say that a subset $K$ of $X$ is \emph{relatively $(p,r)$-compact} if $K\subset (p,r)\mbox{-conv}(x_n)$ for some $(x_n)\in \ell_p(X)$. According to Grothendieck's criterion, the $(\infty, 1)$-compactness coincides with the usual compactness (because $(\infty, 1)\mbox{-conv}(x_n)$ is precisely the closed absolutely convex hull of $(x_n)$). The $(p,1)$-compactness was occasionally considered in the 1980s by Reinov \cite{R1984} and by Bourgain and Reinov \cite{BR} in the study of approximation properties of order $s\leq 1$. The $(p, \past)$-compactness was introduced in 2002 by Sinha and Karn \cite{SK1} under the name of \emph{$p$-compactness}. Remark that the $1$-compactness was considered already in 1973 by Stephani \cite[Section~4]{S73} under the name of nuclearity (of sets) (see also Remark \ref{rem2.3}).

The notion of $p$-null sequences is due to Delgado and Pi\~neiro \cite{PD}. It was extended in \cite{AO1} in a verbatim way as follows. We call a sequence $(x_n)$ in $X$ \emph{$(p,r)$-null} if for every $\e > 0$ there exist $(z_k)\in\e B_{\ell_p(X)}$ and  $N\in \mathbb N$ such that $x_n \in (p,r)\mbox{-conv}(z_k)$ for all $n \geq N$. The \emph{$p$-null} sequences in \cite{PD} are precisely the $(p,\past)$-null sequences.

A useful way  to look at $(p,r)$-convex hulls is the following. It is well known and easy to see that every $(x_k)\in \ell_p(X)$ defines a compact, even approximable, operator $\Phi _{(x_k)}: \ell_r \ra X$ through the equality
\[
\Phi _{(x_k)}(a_k)= \sum _{k=1}^\infty a_k x_k, \; (a_k) \in \ell _r.
\]
Clearly,
\[
(p,r)\mbox{-conv}(x_k)= \Phi_{(x_k)}(B_{\ell_r}).
\]

In \cite{ALO}, $\pr$-compact operators were introduced in an obvious way: a linear operator $T: Y \ra X $ is {\it $(p,r)$-compact} if $T(B_Y)$ is a relatively $(p,r)$-compact subset of $X$. 
Let $\calK_{(p,r)}$ denote the class of all $(p,r)$-compact operators acting between arbitrary Banach spaces. Then $\calK_{(p, \past)}= \calK_p$, the class of \emph{$p$-compact operators in the sense of Sinha--Karn} \cite{SK1}. And $\calK_{(p,1)}$ is the class of  \emph{$p$-compact operators in the Bourgain--Reinov sense} (cf. \cite{BR, R1984}).

Properties of $\calK _p$ were studied in \cite{SK1} and, for instance, in the recent papers \cite{DPS1, DPS2, SK2}. In \cite{ALO}, an alternative approach, which is direct and easier than in these articles, was developed to study the (quasi-Banach) operator ideal structure of $\calK_\pr$, among others, encompassing and clarifying main results on $\calK_p = \calK_{(p, \past)}$. (Remark that in the latter case the same approach was independently developed by Pietsch \cite{P2} yielding an important far-reaching theory of the (Banach) operator ideal $\calK_p$.)

The approach in \cite{ALO} starts as follows. One observes that $\calK_\pr$ is a surjective operator ideal (an easy straightforward verification). Another immediate observation is that 
\[
\Phi_{(x_n)}\in \calN _{(p,1,\rast)}(\ell_r, X),
\]
the space of $(p,1,\rast)$-nuclear operators (for the definition of $\calN_{(t,u,v)}$, see \cite[18.1.1]{P}). But then, by the definition of the surjective hull, the injective associate of $\Phi_{(x_n)}$ belongs to $\calN_{(p,1,\rast)}^{\mathrm{sur}}$. Let us denote it by $\overline\Phi_{(x_n)}$. Observing that any $T \in\calK_\pr (Y,X)$ can be factorized as $T= \overline \Phi _{(x_n)}S$, one easily obtains that 
\[
\calK_{(p,r)}=\calN_{(p,1,\rast )}^{\mathrm{sur}}
\]
as operator ideals (see \cite[Theorem 3.2]{ALO}).

\bigskip

{\bf 2.2. Some classes of bounded sets.} Let us introduce some useful notation which is inspired by \cite{S3}, but seems to be more suggestive than the notation in \cite{S3}.

Let $\bs{b}$ denote the class of all bounded subsets of all Banach spaces, and let $\bs{g}$ be a subclass of $\bs{b}$. Let $X$ be a Banach space. Following \cite[Definition~1.1]{S3}, we denote by $\bs{g}(X)$ the family of subsets of $X$ which are of type $\bs{g}$. For instance, $\bs{b}(X)$ is the family of all bounded subsets of $X$.

We denote by $\bs w$ and $\bs k$, respectively, the classes of all relatively weakly compact and relatively compact subsets of all Banach spaces. It is convenient to denote by $\bs{k}_\pr$ the class of all relatively $\pr$-compact sets in all Banach spaces. In particular, $\bs k= \bs k _{(\infty, 1)}$ and $\bs k _p := \bs k_{(p,\past)}$, the class of all relatively $p$-compact sets.

Let $\calA$ be an operator ideal. Denote by $\calA(\bs{g})$ the subclass of $\bs{b}$, which is given as
\[
\calA (\bs{g})(X)= \{ E \subset X : E\subset T(F) \textrm{ for some } F\in \bs{g}(Y) \textrm{ and } T\in \calA(Y, X) \}
\]
where $X$ is an arbitrary Banach space (in \cite{S3}, the notation $\calA \circ \bs{g}$ is used).

In this notation, Grothendieck's criterion of compactness reads as follows.
\begin{prop}[Grothendieck]\label{G}
One has $\bs k = \overline \calF (\bs b ) = \calK (\bs b)$.
\end{prop} 

\begin{proof}
Let $X$ be a Banach space and let $K \in \bs k (X)$. Grothendieck's criterion gives us a sequence $(x_n)\in c_0(X)$ such that $K\subset \Phi _{(x_n)}(B_{\ell_1})$. Since $\Phi _{(x_n)}\in \overline \calF(\ell_1 , X)$, it is clear that $K$ is of type $\overline \calF (\bs b)$. But $\overline \calF (\bs b) \subset \calK (\bs b)$ because $\overline \calF \subset \calK$. Finally, if $K$ is of type $\calK (\bs b)$, then it is relatively compact. 
\end{proof}

Proposition \ref{G} says, in particular, that $\bs k _{(\infty, 1)}= \calK_{(\infty, 1)}( \bs b)$. Using the definitions of $\bs k _\pr$ and $\calK_\pr$ together with the observation (see Section 2.1) that $\Phi_{(x_n)}$ belongs to the operator ideal $\calN_{(p,1,\rast)}$, the above proof yields also the general case.

\begin{prop}\label{G-gen}
Let $1\leq p \leq \infty$ and $1\leq r \leq \past$. Then $\bs k _\pr = \calN_{(p,1,\rast)}(\bs b)= \calK_\pr (\bs b)$.
\end{prop}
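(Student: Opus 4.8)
The plan is to mimic the proof of Proposition~\ref{G} (Grothendieck's criterion) essentially verbatim, replacing the roles of $c_0$, $\ell_1$, and $\overline\calF$ with $\ell_p(X)$, $\ell_r$, and the relevant nuclear ideal. The three classes to be identified are $\bs k_\pr$ (sets contained in some $(p,r)$-convex hull), $\calN_{(p,1,\rast)}(\bs b)$, and $\calK_\pr(\bs b)$, and I would prove the circle of inclusions $\bs k_\pr(X)\subset\calN_{(p,1,\rast)}(\bs b)(X)\subset\calK_\pr(\bs b)(X)\subset\bs k_\pr(X)$ for an arbitrary Banach space $X$.

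First, take $K\in\bs k_\pr(X)$. By definition there is $(x_n)\in\ell_p(X)$ with $K\subset(p,r)\mbox{-conv}(x_n)=\Phi_{(x_n)}(B_{\ell_r})$. As recalled in Section~2.1, $\Phi_{(x_n)}\in\calN_{(p,1,\rast)}(\ell_r,X)$, and $B_{\ell_r}\in\bs b(\ell_r)$, so $K$ is of type $\calN_{(p,1,\rast)}(\bs b)$. This gives the first inclusion. The second inclusion, $\calN_{(p,1,\rast)}(\bs b)\subset\calK_\pr(\bs b)$, follows at once from the operator-ideal inclusion $\calN_{(p,1,\rast)}\subset\calK_\pr$: indeed $\Phi_{(x_n)}\in\calN_{(p,1,\rast)}$ always lands in $\calK_\pr$ since its image of the unit ball is a $(p,r)$-convex hull, and more generally $\calK_\pr=\calN_{(p,1,\rast)}^{\mathrm{sur}}\supset\calN_{(p,1,\rast)}$. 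Hence any $E\subset T(F)$ with $T\in\calN_{(p,1,\rast)}$ also has $T\in\calK_\pr$, so $E$ is of type $\calK_\pr(\bs b)$.

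For the last inclusion, let $E\in\calK_\pr(\bs b)(X)$, so $E\subset T(F)$ for some bounded set $F\subset B_Y$ (after rescaling) and some $T\in\calK_\pr(Y,X)$. By the very definition of $(p,r)$-compact operators, $T(B_Y)$ is relatively $(p,r)$-compact, i.e.\ contained in some $(p,r)\mbox{-conv}(x_n)$ with $(x_n)\in\ell_p(X)$; since $F\subset\lambda B_Y$ for suitable $\lambda>0$ we get $E\subset\lambda\,(p,r)\mbox{-conv}(x_n)=(p,r)\mbox{-conv}(\lambda x_n)$ and $(\lambda x_n)\in\ell_p(X)$, so $E\in\bs k_\pr(X)$. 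This closes the circle.

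I do not expect a genuine obstacle here — every ingredient is already assembled in the excerpt (the factorization $\Phi_{(x_n)}\in\calN_{(p,1,\rast)}$, the identity $\calK_\pr=\calN_{(p,1,\rast)}^{\mathrm{sur}}$, and the template proof of Proposition~\ref{G}). The only mild point of care is bookkeeping with the scaling constants when passing between bounded sets and unit balls, and noting that $(p,r)\mbox{-conv}$ is positively homogeneous so that scaling a summable sequence keeps it in $\ell_p(X)$; this is routine. In short, the statement is the exact analogue of Grothendieck's criterion with $(\infty,1)$ replaced by $\pr$, and the proof given for Proposition~\ref{G} goes through once one inserts the observation that $\Phi_{(x_n)}\in\calN_{(p,1,\rast)}$.
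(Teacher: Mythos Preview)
Your proposal is correct and follows essentially the same approach as the paper: the paper's ``proof'' of Proposition~\ref{G-gen} is simply the remark that the template argument for Proposition~\ref{G} goes through verbatim once one uses $\Phi_{(x_n)}\in\calN_{(p,1,\rast)}$ and the inclusion $\calN_{(p,1,\rast)}\subset\calN_{(p,1,\rast)}^{\mathrm{sur}}=\calK_\pr$, which is exactly the circle of inclusions you wrote out in detail.
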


\begin{rem}\label{rem2.3}
Using the notion of ideal system of sets (see \cite{S73}), the equalities $\bs k = \calK(\bs b)$ and $\bs w = \calW (\bs b)$ were observed in \cite{S3}. In the special case $p=1$, $r=\infty$, the left-hand equality $\bs k_1= \bs k_{(1, \infty)}= \calN (\bs b)$ of Proposition \ref{G-gen} was proved in \cite{S73}; here $\calN= \calN_{(1,1,1)}$ denotes, as usual, the operator ideal of (classical) nuclear operators. 
\end{rem}

\bigskip

{\bf 2.3. $\calA$-null sequences and $\calA$-compact sets.} Let us now describe the relevant notions (cf. Theorem \ref{LT}) from the Carl--Stephani theory \cite{CS}, which is based on earlier work by Stephani \cite{S72, S73, S3}.

Let $\calA$ be an operator ideal.

Following \cite[Lemma~1.2]{CS}, a sequence $(x_n)$ in a Banach space $X$ is called \emph{$\calA$-null} if there exist a Banach space $Y$, a null sequence $(y_n)$ in $Y$, and $T\in\calA(Y, X)$ such that $x_n =T y_n$ for all $n \in \N$.

Using the notation of Section 2.2 and following \cite[Theorem 1.2]{CS}, we say (as in \cite{LT}) that a subset $K$ of a Banach space $X$ is \emph{$\calA$-compact} if $K$ is of type $\calA(\bs k)$, i.e. $K\in \calA(\bs k) (X)$. 

Using Proposition \ref{G} and \ref{G-gen} we shall see now that the relatively $\pr$-compact sets, $\calN_{(p,1,\rast)}$-compact sets, and $\calK_\pr$-compact sets are all the same.
\begin{prop}\label{prop3.1}
Let $1\leq p \leq \infty$ and $1\leq r \leq \past$. Then $\bs k _\pr = \calN_{(p,1,\rast)}(\bs k)= \calK_\pr (\bs k)$.
\end{prop}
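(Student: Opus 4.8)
The plan is to establish the chain of inclusions
\[
\bs k_\pr \subset \calN_{(p,1,\rast)}(\bs k) \subset \calK_\pr(\bs k) \subset \bs k_\pr
\]
and then appeal to Proposition \ref{G-gen}, which already identifies the first and last classes via the larger ``source'' class $\bs b$. The first inclusion is essentially free: by Proposition \ref{G-gen} we have $\bs k_\pr = \calN_{(p,1,\rast)}(\bs b)$, and since every relatively compact set is bounded, $\bs k \subset \bs b$, whence $\calN_{(p,1,\rast)}(\bs k) \subset \calN_{(p,1,\rast)}(\bs b) = \bs k_\pr$; but we actually want the reverse here, so the right way to read it is that we need $\bs k_\pr \subset \calN_{(p,1,\rast)}(\bs k)$. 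For this, take $K \in \bs k_\pr(X)$; by definition $K \subset \Phi_{(x_n)}(B_{\ell_r})$ for some $(x_n) \in \ell_p(X)$, and $\Phi_{(x_n)} \in \calN_{(p,1,\rast)}(\ell_r, X)$ as recalled in Section 2.1. Since $B_{\ell_r}$ is bounded but \emph{not} compact when $r > 1$ (and $\ell_r$ is infinite-dimensional), one cannot directly say $B_{\ell_r} \in \bs k(\ell_r)$; instead one factors through compactness using that $\Phi_{(x_n)}$ is itself approximable, hence compact, or rather one writes $K \subset \Phi_{(x_n)}(B_{\ell_r})$ where the closure $\overline{\Phi_{(x_n)}(B_{\ell_r})}$ is a compact subset of $X$ and then uses the identity operator on $X$ — more precisely, $K \subset I_X(\overline{\Phi_{(x_n)}(B_{\ell_r})})$ exhibits $K$ as being of type $\calN_{(p,1,\rast)}(\bs k)$ only if $I_X \in \calN_{(p,1,\rast)}$, which fails in general. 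So the correct argument is the other factorization: write $K \subset \Phi_{(x_n)}(\overline{B_{\ell_r}})$ and observe that since $\Phi_{(x_n)} \in \calN_{(p,1,\rast)}$, we may split $\Phi_{(x_n)} = \Phi_{(x_n)} \circ \mathrm{id}_{\ell_r}$ — this is circular, so instead factor $\Phi_{(x_n)}$ through a compact set in the source: realize $\ell_r \hookrightarrow$ some space via a compact map. The cleanest route is: $\Phi_{(x_n)}$ compact means $\Phi_{(x_n)}(B_{\ell_r})$ is relatively compact, so $K \subset \overline{\Phi_{(x_n)}(B_{\ell_r})} =: C \in \bs k(X)$, and then $K \subset \mathrm{id}_X(C)$; but $\mathrm{id}_X \notin \calN_{(p,1,\rast)}$ in general. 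Thus the genuine argument must keep the $(p,1,\rast)$-nuclear operator on the left and a \emph{compact} set on the right of a representation $K \subset T(L)$, which forces one to absorb the ``bad'' ball $B_{\ell_r}$ into the nuclear operator: factor $\Phi_{(x_n)} = \Phi_{(x_n)} j$ where $j \colon \ell_r \to \ell_r$ — no. The actual fix, standard in this circle of ideas, is to use that any $(p,1,\rast)$-nuclear operator factors as $\Phi_{(x_n)} = B D A$ with $A$ bounded into $\ell_\infty$, $D$ a diagonal multiplication by an $\ell_p$-sequence (hence compact, even nuclear into $c_0$), and $B$ bounded; then $D(B_{\ell_\infty})$ is relatively compact in $c_0$, so $K \subset B(\overline{D(B_{\ell_\infty})})$ with $B$ bounded and the set compact, giving $K \in \calK(\bs k) = \bs k$ — but we need $\calN_{(p,1,\rast)}(\bs k)$, so one simply keeps $BD$ as the nuclear piece applied to the compact set $A(B_{\ell_r})$... this still needs $A(B_{\ell_r})$ compact, which it is not.

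Given these contortions, the honest and shortest proof is exactly the one the authors signal: mimic the proof of Proposition \ref{G} verbatim. There, starting from $K \in \bs k(X)$, Grothendieck gives $(x_n) \in c_0(X)$ with $K \subset \Phi_{(x_n)}(B_{\ell_1})$, and the point was that $\Phi_{(x_n)} \in \overline\calF \subset \calK$, so $K$ is of type $\overline\calF(\bs b)$ and of type $\calK(\bs b)$; then trivially $\calK(\bs b) \supset \bs k$ forces equality. For Proposition \ref{prop3.1}, I start instead from $K \in \bs k_\pr(X)$, get $(x_n) \in \ell_p(X)$ with $K \subset \Phi_{(x_n)}(B_{\ell_r})$, and note that since $\Phi_{(x_n)} \in \overline\calF(\ell_r, X) \subset \calK(\ell_r, X)$, the relatively compact set $B := \overline{\Phi_{(x_n)}(B_{\ell_r})}$ lies in $\bs k(X)$; then $K \subset \mathrm{id}_X(B)$ shows $K \in \calK(\bs k)(X)$ — wait, again this needs the operator on the left to be in the right ideal. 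The resolution: do not put the identity on the left; instead note $K \subset \Phi_{(x_n)}(B_{\ell_r})$ \emph{is already} a representation with $\Phi_{(x_n)} \in \calN_{(p,1,\rast)}$ on the left and $B_{\ell_r} \in \bs b(\ell_r)$ on the right, so $K \in \calN_{(p,1,\rast)}(\bs b)$; and to upgrade $\bs b$ to $\bs k$ one replaces $\ell_r$ by a compact-source factorization of $\Phi_{(x_n)}$: since $\Phi_{(x_n)}$ is compact, write $\Phi_{(x_n)} = \Psi \circ \kappa$ where $\kappa \colon \ell_r \to Z$ has relatively compact range with $\overline{\kappa(B_{\ell_r})} \in \bs k(Z)$ and $\Psi \colon Z \to X$ is bounded — but we need $\Psi \in \calN_{(p,1,\rast)}$, not merely bounded.

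Here is the point where the real argument lives, and it is the main obstacle: one must show that a $(p,1,\rast)$-nuclear operator $\Phi_{(x_n)} \colon \ell_r \to X$ can be written as (a $(p,1,\rast)$-nuclear operator) $\circ$ (a compact operator), i.e. that $\calN_{(p,1,\rast)} = \calN_{(p,1,\rast)} \circ \calK$ in the relevant sense, equivalently that $\calN_{(p,1,\rast)}(\bs k) = \calN_{(p,1,\rast)}(\bs b)$. This is a standard ``gliding hump''/reordering fact: given $(x_n) \in \ell_p(X)$, split the index set and write $x_n = \lambda_n u_n$ with $(\lambda_n) \in \ell_p$, $\lambda_n \to 0$, and $(u_n)$ bounded (possible since $\|x_n\| \to 0$ as $(x_n) \in \ell_p(X)$, $p < \infty$; for $p = \infty$ use that $(x_n) \in c_0(X)$ directly); then $\Phi_{(x_n)} = \Phi_{(u_n)} \circ D_{(\lambda_n)}$ where $D_{(\lambda_n)} \colon \ell_r \to \ell_r$ is the diagonal multiplier, which is compact since $\lambda_n \to 0$, and $\Phi_{(u_n)} \colon \ell_r \to X$ is bounded with $\Phi_{(u_n)} \in \calN_{(\infty,1,\rast)}$. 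To land in $\calN_{(p,1,\rast)}$ rather than merely $\calK \circ \calL$, one distributes the $\ell_p$-weights more symmetrically: write $\lambda_n = \mu_n \nu_n$ with $(\mu_n), (\nu_n) \in \ell_{2p}$ say — better, with $(\mu_n) \in \ell_p$ and $(\nu_n) \in c_0$ — so that $\Phi_{(x_n)} = \Phi_{(\mu_n u_n)} \circ D_{(\nu_n)}$ with $\Phi_{(\mu_n u_n)} \in \calN_{(p,1,\rast)}(\ell_r, X)$ (as $(\mu_n u_n) \in \ell_p(X)$) and $D_{(\nu_n)} \colon \ell_r \to \ell_r$ compact; then $\overline{D_{(\nu_n)}(B_{\ell_r})} \in \bs k(\ell_r)$ and $K \subset \Phi_{(\mu_n u_n)}\big(\overline{D_{(\nu_n)}(B_{\ell_r})}\big)$ witnesses $K \in \calN_{(p,1,\rast)}(\bs k)(X)$. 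Thus $\bs k_\pr \subset \calN_{(p,1,\rast)}(\bs k)$. The remaining inclusions $\calN_{(p,1,\rast)}(\bs k) \subset \calK_\pr(\bs k)$ and $\calK_\pr(\bs k) \subset \bs k_\pr$ are then formal: the first follows from $\calN_{(p,1,\rast)} \subset \calN_{(p,1,\rast)}^{\mathrm{sur}} = \calK_\pr$ (Section 2.1), so any $\calN_{(p,1,\rast)}(\bs k)$-set is a fortiori $\calK_\pr(\bs k)$; and the second follows because if $K \subset T(C)$ with $T \in \calK_\pr(Y,X)$ and $C \in \bs k(Y)$, then by Grothendieck $C \subset \Phi_{(y_n)}(B_{\ell_1})$ for some $(y_n) \in c_0(Y)$, so $K \subset (T\Phi_{(y_n)})(B_{\ell_1})$ and $T\Phi_{(y_n)} \in \calK_\pr$ (operator ideal property), hence $(T\Phi_{(y_n)})(B_{\ell_1})$, being a relatively $\pr$-compact set containing $K$, puts $K$ in $\bs k_\pr(X)$. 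Chaining the three inclusions closes the loop and proves Proposition \ref{prop3.1}; I expect the factorization $\Phi_{(x_n)} = \Phi_{(\mu_n u_n)} \circ D_{(\nu_n)}$ and the verification that $D_{(\nu_n)}$ has relatively compact range (equivalently, is a compact diagonal operator on $\ell_r$, which holds precisely because $\nu_n \to 0$) to be the only non-formal point.
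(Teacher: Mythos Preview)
Your eventual argument is correct. The factorization $\Phi_{(x_n)} = \Phi_{(\mu_n u_n)} \circ D_{(\nu_n)}$, with $(\mu_n u_n)\in\ell_p(X)$ and $(\nu_n)\in c_0$ (obtained from the standard ``room to spare'' splitting of an $\ell_p$-sequence), is exactly what is needed: the diagonal operator $D_{(\nu_n)}$ is compact on $\ell_r$, so $K\subset\Phi_{(\mu_n u_n)}\big(\overline{D_{(\nu_n)}(B_{\ell_r})}\big)$ exhibits $K\in\calN_{(p,1,\rast)}(\bs k)$. The remaining inclusions are formal; in fact $\calK_\pr(\bs k)\subset\calK_\pr(\bs b)=\bs k_\pr$ is immediate from Proposition~\ref{G-gen}, so your detour through Grothendieck at the end is unnecessary (though not wrong). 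The long stream of false starts before you arrive at the diagonal factorization should be cut: the first two pages of the proposal contribute nothing.

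The paper's proof is different and shorter. It does not build the factorization by hand but invokes the \emph{minimality} of the ideal $\calN_{(p,1,\rast)}$ from Pietsch \cite[18.1.4]{P}, namely $\calN_{(p,1,\rast)}=\overline\calF\circ\calN_{(p,1,\rast)}\circ\overline\calF$, and then simply reads off
\[
\bs k_\pr=\calN_{(p,1,\rast)}(\bs b)=(\overline\calF\circ\calN_{(p,1,\rast)})\big(\overline\calF(\bs b)\big)=(\overline\calF\circ\calN_{(p,1,\rast)})(\bs k)\subset\calN_{(p,1,\rast)}(\bs k)
\]
via Proposition~\ref{G}. Your diagonal-splitting trick is, in effect, a direct elementary proof of the particular consequence $\calN_{(p,1,\rast)}\subset\calN_{(p,1,\rast)}\circ\calK$ of minimality, restricted to the operators $\Phi_{(x_n)}$ that actually arise. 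The paper's route is cleaner and pinpoints the structural reason behind the result; yours is self-contained and avoids citing the Pietsch machinery. Substantively both hinge on the same phenomenon: one can peel a compact operator off the right of $\Phi_{(x_n)}$ while keeping the left factor $(p,1,\rast)$-nuclear.
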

\begin{proof}
We know that $\calN_{(p,1,\rast)}$ is a minimal operator ideal (see \cite[18.1.4]{P}). This means that $\calN_{(p,1,\rast)}= \overline \calF \circ \calN_{(p,1,\rast)} \circ \overline \calF$ (see \cite[4.8.6]{P}). Hence, using Propositions \ref{G-gen} and \ref{G}, we have
\begin{eqnarray*}
\calK_\pr (\bs k)& \subset & \calK_\pr (\bs b) = \bs k_\pr = \calN_{(p,1,\rast)}(\bs b)= (\overline \calF \circ \calN_{(p,1,\rast)})(\overline \calF (\bs b))\\
&=& \overline \calF \circ \calN_{(p,1,\rast)}(\bs k)\subset \calN_{(p,1,\rast)}(\bs k) \subset \calK_\pr (\bs k).
\end{eqnarray*}
This shows that $\bs k _\pr = \calN_{(p,1,\rast)}(\bs k)= \calK_\pr (\bs k)$.
\end{proof}

\begin{rem}
The second equality in Proposition \ref{prop3.1} also follows from the general Carl--Stephani theory. Indeed, for any operator ideal $\calA$, it is known (see \cite[p.~79]{CS}) that a subset is $\calA$-compact if and only if it is $\calA^ \mathrm{sur}$-compact. And (see Section 2.1) $\calN_{(p,1,\rast)}^\mathrm{sur}=\calK_\pr$.
\end{rem}
\end{section}

\begin{section}{An omnibus characterization of $(p,r)$-null sequences}
Theorem \ref{omni} below is an omnibus theorem, which provides six equivalent properties for a sequence in a Banach space to be a $\pr$-null sequence. One of these properties is to be a uniformly $\pr$-null sequence, which is a natural (formal) strengthening of a $\pr$-null sequence.

Let $1 \leq p < \infty$ and $1\leq r \leq \past$. We call a sequence $(x_n)$ in a Banach space $X$ \emph{uniformly $\pr$-null} if there exists $(z_k) \in B_{\ell_p(X)}$ with the following property: for every $\e >0$ there exists $N \in \N$ such that $x_n \in \e \, (p,r)\textnormal{-conv}(z_k)$ for all $n\geq N$. 

We say that $(x_n)$ is \emph{uniformly $p$-null} if it is uniformly $(p,\past)$-null. The latter property was implicitly used in a result by Lassalle and Turco asserting (in the above terminology) that the $p$-null sequences are always uniformly $p$-null (concerning the proof (and its simple alternative), see Remark \ref{R3.3}).

\begin{thm}\label{omni}
Let $1 \leq p < \infty$ and $1 \leq r \leq \past$. For a sequence $(x_n)$ in a Banach space $X$ the following statements are equivalent:
\begin{enumerate}
\item $(x_n)$ is $(p,r)$-null,
\item $(x_n)$ is null and relatively $(p,r)$-compact,
\item $(x_n)$ is null and $\calN_{(p,1,\rast)}$-compact,
\item $(x_n)$ is null and $\calK_{(p,r)}$-compact,
\item $(x_n)$ is $\calN_{(p,1,\rast)}$-null,
\item $(x_n)$ is $\calK_{(p,r)}$-null,
\item $(x_n)$ is uniformly $(p,r)$-null.
\end{enumerate}
\end{thm}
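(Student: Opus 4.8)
The plan is to prove the chain of implications $(1)\Ra(2)\Ra(3)\Ra(4)\Ra(5)\Ra(6)\Ra(7)\Ra(1)$, exploiting the identifications established in Section~2 so that several links become essentially tautological. First I would dispose of $(1)\Ra(2)$: a $\pr$-null sequence is clearly null (take $\e\to 0$ in the definition), and for relative $\pr$-compactness one applies the definition with, say, $\e=1$ to land all but finitely many $x_n$ inside a single set $(p,r)\textnormal{-conv}(z_k)$, then enlarges the $\ell_p(X)$-sequence by adjoining the finitely many exceptional terms (suitably scaled) — this is a routine manipulation with $\ell_p$-sequences. The equivalences $(2)\Leftrightarrow(3)\Leftrightarrow(4)$ are immediate from Proposition~\ref{prop3.1}, which says $\bs k_\pr=\calN_{(p,1,\rast)}(\bs k)=\calK_\pr(\bs k)$, i.e. ``relatively $\pr$-compact'', ``$\calN_{(p,1,\rast)}$-compact'', and ``$\calK_\pr$-compact'' name the same class of sets.

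Next, for $(4)\Ra(5)$ (or equivalently $(2)\text{ or }(3)\Ra(5)$) I would invoke the Lassalle--Turco Theorem~\ref{LT} with the operator ideal $\calA=\calN_{(p,1,\rast)}$: a sequence is $\calN_{(p,1,\rast)}$-null if and only if it is null and $\calN_{(p,1,\rast)}$-compact, which is exactly statement~(3). Similarly $(5)\Leftrightarrow(6)$: since $\calN_{(p,1,\rast)}\subset\calN_{(p,1,\rast)}^{\mathrm{sur}}=\calK_\pr$, being $\calN_{(p,1,\rast)}$-null trivially implies being $\calK_\pr$-null; conversely, a $\calK_\pr$-null sequence is null and $\calK_\pr$-compact by Theorem~\ref{LT}, hence null and $\calN_{(p,1,\rast)}$-compact by Proposition~\ref{prop3.1}, hence $\calN_{(p,1,\rast)}$-null again by Theorem~\ref{LT}. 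So the only substantive work is the loop $(6)\Ra(7)\Ra(1)$, and the implication $(7)\Ra(1)$ is trivial from the definitions (a uniformly $\pr$-null sequence is $\pr$-null with the single $(z_k)$ and $\e B_{\ell_p(X)}\supset$ the scaled copy).

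The heart of the matter — and the step I expect to be the main obstacle — is $(6)\Ra(7)$: upgrading $\calK_\pr$-nullness to uniform $\pr$-nullness. Here $(x_n)=Ty_n$ for some null sequence $(y_n)$ in a Banach space $Y$ and some $T\in\calK_\pr(Y,X)$. Since $\calK_\pr$ is surjective and $\calK_\pr=\calN_{(p,1,\rast)}^{\mathrm{sur}}$, one can factor $T$ (through $\ell_\infty(B_{Y^\ast})$ or via the canonical surjection onto $Y$) as $T=\overline\Phi_{(z_k)}S$ with $(z_k)\in\ell_p(X)$, $\|(z_k)\|_p$ as close to $\|T\|_{\calK_\pr}$ as desired, and $\|S\|\le 1$, where $\overline\Phi_{(z_k)}$ is the injective associate of $\Phi_{(z_k)}\colon\ell_r\to X$ with $T(B_Y)\subset\Phi_{(z_k)}(B_{\ell_r})=(p,r)\textnormal{-conv}(z_k)$. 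The point is then: $S$ maps the null sequence $(y_n)$ to a null sequence, and a null sequence in the domain of $\overline\Phi_{(z_k)}$ that is fed through $B_{\ell_r}$ must eventually lie in $\e\,(p,r)\textnormal{-conv}(z_k)$. Making this precise requires a compactness/lifting argument: each $x_n=\Phi_{(z_k)}(a^{(n)})$ with $\|a^{(n)}\|_r\le 1$, and one needs that $\|a^{(n)}\|_r\to 0$, or at least that the $x_n$ can be re-expressed with $\ell_r$-coefficient sequences of norm tending to $0$; this is exactly the content that would otherwise be extracted from \cite[Proposition~1.5]{LT}, and the remark promised in the paper (Remark~\ref{R3.3}) is presumably that it now comes for free. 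I would handle it by a direct argument: scale $(z_k)$ so that $\|(z_k)\|_p\le\|T\|_{\calK_\pr}+\delta$, and show that for $n$ large the preimage coefficients have small $\ell_r$-norm by using that $(y_n)\to 0$ together with the norm estimate built into the factorization — if a diagonal/uniform bound resists, one falls back on replacing $Y$ by $c_0$ and $(y_n)$ by the canonical basis scaled by $\|y_n\|$, reducing to a concrete computation in sequence spaces. Once $\|a^{(n)}\|_r\to 0$ is in hand, uniform $\pr$-nullness follows immediately with the fixed $(z_k)$.
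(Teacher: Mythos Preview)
Your overall chain of implications and the use of Proposition~\ref{prop3.1} and Theorem~\ref{LT} match the paper exactly. The only discrepancy is your treatment of $(6)\Ra(7)$, where you are working much too hard and leaving yourself an incomplete argument.

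You write $x_n=Ty_n$ with $(y_n)$ null in $Y$ and $T\in\calK_\pr(Y,X)$, obtain $(w_k)\in\ell_p(X)$ with $T(B_Y)\subset\pr\textnormal{-conv}(w_k)$, and then propose to lift the $x_n$ back through $\Phi_{(w_k)}$ to coefficient sequences $a^{(n)}\in B_{\ell_r}$ and argue that $\|a^{(n)}\|_r\to 0$. This lifting is delicate (the map $\Phi_{(w_k)}$ need not be injective, and on the quotient you lose the explicit $\ell_r$-control), and you yourself flag the uncertainty. But none of this is needed. The observation you are missing is a one-liner: since $(y_n)$ is null, for any $\e_0>0$ there is $N$ with $\|y_n\|\le\e_0$ for $n\ge N$, hence $y_n\in\e_0 B_Y$ and therefore
\[
x_n=Ty_n\in\e_0\,T(B_Y)\subset\e_0\,\pr\textnormal{-conv}(w_k).
\]
Normalizing $(z_k):=(w_k/\|(w_k)\|_p)\in B_{\ell_p(X)}$ and choosing $\e_0=\e/\|(w_k)\|_p$ gives $x_n\in\e\,\pr\textnormal{-conv}(z_k)$ for $n\ge N$, which is exactly uniform $\pr$-nullness. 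This is precisely how the paper does it, and it is also why Remark~\ref{R3.3} says the technical \cite[Proposition~1.5]{LT} drops out for free: the small-coefficient representation you were trying to construct by a lifting argument is handed to you automatically by the inclusion $T(\e_0 B_Y)\subset\e_0\,\pr\textnormal{-conv}(w_k)$.
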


\begin{proof}
An easy verification of (a)$\Rightarrow $(b) can be found in \cite[Proposition~2]{AO1}. For completeness and easy reference, let us present it here. 

Since  $(x_n)$ is $(p,r)$-null, for every $\e >0$ there are $N \in \N$ and $(z_k)\in \ell_p(X)$, $\Vert (z_k) \Vert _p \leq \e$, such that $x_n =\sum _{k=1}^\infty a_k^n z_k $, where $(a_k ^n)_{k=1}^\infty \in B_{\ell _r}$, for all $n \geq N$. Hence, for all $n\geq N$,
\[
\Vert x_n  \Vert\leq \sum _{k=1}^\infty \Vert a_k^n z_k\Vert \leq \Vert (a_k^n)_k \Vert _{\past} \Vert (z_k)\Vert _p \leq \Vert (a_k^n)_k\Vert _r \Vert (z_k)\Vert _p \leq \e, \; 
\]  
and therefore $x_n\ra 0$.

Since $\{x_N, x_{N+1}, ...\}\subset(p,r)$-conv$(z_k)$ and $(z_k)\in \ell_p(X)$, 
the sequence
\[
y_k=\begin{cases}
x_k& \text{if } k< N, \\
z_{k-N+1}  & \text{if } k \geq N,
\end{cases}
\]
is in $\ell_p(X)$ and $x_n \in (p,r)$-conv$(y_k)$ for all $n \in \N$. This means that $(x_n)$ is relatively $\pr$-compact.

Implications (b)$ \Leftrightarrow $(c)$\Leftrightarrow$(d) are immediate from Proposition \ref{prop3.1}.

Implications (c)$\Leftrightarrow$(e) and (d)$\Leftrightarrow$(f) are immediate from Theorem \ref{LT}.

To prove that (f)$\Rightarrow$(g), let $(x_n)$ be a $\calK_{(p,r)}$-null sequence. Then there are a null sequence $(y_n)$ in a Banach space $Y$ and an operator $T \in \calK_\pr (Y,X)$ such that $x_n= Ty_n$ for all $n\in \N$. The $\pr$-compactness of $T$ gives us a sequence $(w_k) \in \ell_p(X)$ such that $T(B_Y)\subset \pr$-conv$(w_k)$.
Now $(z_k):=\left (\frac{w_k}{\norm{(w_k)}_p }\right )\in B_{\ell_p(X)}$, and let $\e>0$. As $(y_n)$ is null in $Y$, for $\e_0:=\frac{\e}{\norm{(w_k)}_p}$ there exists $N\in \N$ such that $Ty_n \in \e_0 T(B_Y)$ for all $n\geq N$. Hence,
\[
x_n\in \e_0\pr\textnormal{-conv}(w_k) =\e_0 \norm{(w_k)}_p \pr \textnormal{-conv}(z_k)= \e \pr\textnormal{-conv}(z_k)
\]
for all $n \geq N$, as desired.

The implication (g)$\Rightarrow$(a) is clear from the definitions, because if $(z_k)\in B_{\ell_p(X)}$, then $(\e z_k)\in \e B_{\ell_p(X)}$ and $\pr \textnormal{-conv}(\e z_k)= \e \pr\textnormal{-conv}(z_k)$.
\end{proof}

\begin{rem}\label{R3.3}
In the special case when $r=\past$, Theorem \ref{omni} contains Theorem \ref{DPO}, complementing it and providing for it a somewhat easier proof than in \cite{LT}. In fact, the technical Lassalle--Turco result \cite[Proposition 1.5]{LT} (inspired by \cite[Theorem 1]{AMR}) is not needed. Even more, this technical result appears as a simple by-product of our proof: it is precisely the special case of the implication (a)$\Rightarrow$(g) when $r= \past$.
\end{rem}

Let $\calA$ be an operator ideal. Let $K$ be an $\calA$-compact set and let $(x_n)$ be an $\calA$-null sequence. If $\calB$ is a larger operator ideal than $\calA$, i.e. $\calA \subset \calB$, then, by definitions, clearly, $K$ is also $\calB$-compact and $(x_n)$ is $\calB$-null. In \cite[Proposition~4.7]{ALO}, it was proved  that 
\[
\calK_\pr = \calI_{(p,1,\rast)}^\mathrm{sur} \circ \calK,
\]
where $\calI_{(p,1, \rast)}$ is the operator ideal of \emph{$(p,1,\rast)$-integral operators} (for the definition of these general integral operators, see \cite[19.1.1]{P}). This equality enables us to extend characterizations (d) and (f) of $\pr$-null sequences of Theorem \ref{omni} to even more larger operator ideal than $\calK_\pr$, namely to $\calI_{(p,1,\rast)}^\mathrm{sur}$.

\begin{prop}\label{prop3.3}
Let $1 \leq p < \infty$ and $1 \leq r \leq \past$. For a sequence $(x_n)$ in a Banach space $X$ the following statements are equivalent:
\begin{enumerate}
\item $(x_n)$ is $(p,r)$-null,
\item $(x_n)$ is null and $\calI_{(p,1,\rast)}^\mathrm{sur}$-compact,
\item $(x_n)$ is $\calI_{(p,1,\rast)}^\mathrm{sur}$-null.
\end{enumerate}
\end{prop}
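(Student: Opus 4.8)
The plan is to obtain the three equivalences from what is already available, namely Theorem~\ref{omni}, Theorem~\ref{LT}, and the identity $\calK_\pr=\calI_{(p,1,\rast)}^\mathrm{sur}\circ\calK$ of \cite[Proposition~4.7]{ALO}; I will prove the cycle (a)$\Rightarrow$(c)$\Rightarrow$(a) together with (b)$\Leftrightarrow$(c).

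First I would record the inclusion $\calK_\pr\subset\calI_{(p,1,\rast)}^\mathrm{sur}$: since $\calK\subset\calL$, every $T\in\calK_\pr$ factors as $T=AB$ with $A\in\calI_{(p,1,\rast)}^\mathrm{sur}$ and $B\in\calK\subset\calL$, so $T\in\calI_{(p,1,\rast)}^\mathrm{sur}$ because an operator ideal is stable under composition with bounded operators. Granting this, (a)$\Rightarrow$(c) is immediate: by Theorem~\ref{omni} the sequence $(x_n)$ is $\calK_\pr$-null, and a $\calK_\pr$-null sequence is automatically $\calI_{(p,1,\rast)}^\mathrm{sur}$-null, as noted just before the statement. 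The equivalence (b)$\Leftrightarrow$(c) is nothing but Theorem~\ref{LT} applied to the operator ideal $\calA=\calI_{(p,1,\rast)}^\mathrm{sur}$.

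The only real content is (c)$\Rightarrow$(a). Assume $(x_n)$ is $\calI_{(p,1,\rast)}^\mathrm{sur}$-null, say $x_n=Ty_n$ with $(y_n)$ a null sequence in a Banach space $Y$ and $T\in\calI_{(p,1,\rast)}^\mathrm{sur}(Y,X)$. Continuity of $T$ gives $x_n\to 0$. For the relative $\pr$-compactness of $\{x_n:n\in\N\}$, I would use that the set $\{y_n:n\in\N\}\cup\{0\}$ is compact, hence by Grothendieck's criterion (Proposition~\ref{G}) lies in $R(D)$ for some $R\in\calK(W,Y)$ and some bounded $D\subset W$; then $TR\in\calI_{(p,1,\rast)}^\mathrm{sur}\circ\calK=\calK_\pr(W,X)$ and $\{x_n:n\in\N\}\subset TR(D)$, so $\{x_n:n\in\N\}$ is of type $\calK_\pr(\bs b)=\bs k_\pr$ (Proposition~\ref{G-gen}), i.e.\ relatively $\pr$-compact. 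Thus $(x_n)$ is null and relatively $\pr$-compact, and Theorem~\ref{omni} yields that $(x_n)$ is $\pr$-null.

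I do not anticipate a serious obstacle here; the argument simply transports the job onto the already proved Theorems~\ref{omni} and \ref{LT} via \cite[Proposition~4.7]{ALO}. The one point to get right is the order in the factorization: the compact operator is applied first, to manufacture a compact set out of the null sequence, and the $\calI_{(p,1,\rast)}^\mathrm{sur}$-operator second, so that the composite lands in $\calI_{(p,1,\rast)}^\mathrm{sur}\circ\calK=\calK_\pr$ and not in some a priori different class.
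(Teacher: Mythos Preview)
Your proof is correct and uses the same ingredients as the paper's---Theorems~\ref{omni} and~\ref{LT} together with the identity $\calK_\pr=\calI_{(p,1,\rast)}^\mathrm{sur}\circ\calK$. The paper packages the argument slightly more compactly by first establishing the set-class equality $\bs k_\pr=\calK_\pr(\bs b)=\calI_{(p,1,\rast)}^\mathrm{sur}(\calK(\bs b))=\calI_{(p,1,\rast)}^\mathrm{sur}(\bs k)$ in one line and then invoking Theorems~\ref{omni} and~\ref{LT}; your (c)$\Rightarrow$(a) step is exactly this computation carried out at the level of a single sequence rather than at the level of set classes.
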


\begin{proof}
As was mentioned, $\calK_\pr = \calI _{(p,1, \rast)}^\mathrm{sur}\circ \calK$. Hence, using Propositions \ref{G-gen} and \ref{G}, we have
\[
\bs k _\pr = \calK_\pr (\bs b)= \calI_{(p,1,\rast)}^\mathrm{sur}(\calK(\bs b)) = \calI_{(p,1,\rast)}^\mathrm{sur} (\bs k).
\]
This shows that relatively $\pr$-compact sets are exactly $\calI_{(p,1,\rast)}^\mathrm{sur}$-compact sets. The claim now follows from Theorems \ref{omni} and \ref{LT}.
\end{proof}

Concerning the special case when $r=\past$, i.e., $\rast=p$, by definition, the operator ideal of \emph{right $p$-nuclear operators} $\calN^p= \calN_{(p,1,p)}$ (cf. \cite[18.1.1]{P} and, e.g., \cite[p.~140]{Ry}). Also, let $\calP_p$ denote the operator ideal of \emph{absolutely $p$-summing operators} ($p$-summing operators in \cite{DJT}). It was noted in \cite[p.~157]{ALO} that $\calP_p^\mathrm{dual}= \calI_{(p,1,p)}^\mathrm{sur}$. Therefore we can spell out, from Theorem \ref{omni} and Proposition \ref{prop3.3}, the following omnibus characterization of $p$-null  sequences.

\begin{cor}
Let $1 \leq p < \infty$. For a sequence $(x_n)$ in a Banach space $X$ the following statements are equivalent:
\begin{enumerate}
\item $(x_n)$ is $p$-null,
\item $(x_n)$ is null and relatively $p$-compact,
\item $(x_n)$ is null and $\calN^p$-compact,
\item $(x_n)$ is null and $\calK_p$-compact,
\item $(x_n)$ is null and $\calP_p ^\mathrm{dual}$-compact,
\item $(x_n)$ is $\calN^p$-null,
\item $(x_n)$ is $\calK_p$-null,
\item $(x_n)$ is $\calP_p^\mathrm{dual}$-null,
\item $(x_n)$ is uniformly $p$-null.
\end{enumerate}
\end{cor}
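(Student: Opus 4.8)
The plan is to read the final Corollary as the specialization of Theorem~\ref{omni} and Proposition~\ref{prop3.3} to the ``limit'' case $r=\past$, which forces $\rast=p$, together with the bookkeeping identifications of the ambient operator ideals. First I would record the purely notational facts: by the definition of $\calN_{(t,u,v)}$, one has $\calN_{(p,1,\rast)}=\calN_{(p,1,p)}=\calN^p$, the right $p$-nuclear operators; and by the definition of $\pr$-compactness with $r=\past$, the class $\calK_{(p,\past)}$ is exactly $\calK_p$, the Sinha--Karn $p$-compact operators, while $\bs k_{(p,\past)}=\bs k_p$ is the class of relatively $p$-compact sets. These are all stated in Section~2, so nothing new is needed here.

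Next I would invoke the cited identity $\calP_p^{\mathrm{dual}}=\calI_{(p,1,p)}^{\mathrm{sur}}$ from \cite[p.~157]{ALO}. With $r=\past$ this is precisely the operator ideal appearing in Proposition~\ref{prop3.3}, so statements (b), (c) of that proposition become, verbatim, ``$(x_n)$ is null and $\calP_p^{\mathrm{dual}}$-compact'' and ``$(x_n)$ is $\calP_p^{\mathrm{dual}}$-null'', i.e.\ items (e) and (h) of the Corollary. That yields (a)$\Leftrightarrow$(e)$\Leftrightarrow$(h) directly.

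Then I would simply translate Theorem~\ref{omni} with $r=\past$ into the present notation: its item (a) is (a) here; (b) is (b) here since $\bs k_{(p,\past)}=\bs k_p$; (c) is (c) here after replacing $\calN_{(p,1,p)}$ by $\calN^p$; (d) and (f) are (d) and (g) here after replacing $\calK_{(p,\past)}$ by $\calK_p$; (e) is (f) here; and (g), the uniform $(p,\past)$-null property, is by definition the uniform $p$-null property, i.e.\ (i) here. Stringing these equivalences together with the two from Proposition~\ref{prop3.3} gives the equivalence of all nine statements.

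I do not expect any genuine obstacle: the only thing to be careful about is getting the index juggling right, namely that $r=\past$ is equivalent to $\rast=p$ (so that the ``middle'' slot of $\calN_{(p,1,\rast)}$ and $\calI_{(p,1,\rast)}^{\mathrm{sur}}$ really does become $p$), and correctly matching the $\calA$-null versus ``null and $\calA$-compact'' formulations coming from Theorem~\ref{LT}. If anything counts as the ``hard part'', it is merely checking that the reference identities $\calN_{(p,1,p)}=\calN^p$ and $\calP_p^{\mathrm{dual}}=\calI_{(p,1,p)}^{\mathrm{sur}}$ are being applied to the right objects; once those are in place the Corollary is immediate from the two quoted results.
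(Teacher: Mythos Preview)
Your proposal is correct and follows exactly the paper's own approach: the Corollary is obtained by specializing Theorem~\ref{omni} and Proposition~\ref{prop3.3} to $r=\past$, using the identifications $\calN_{(p,1,p)}=\calN^p$, $\calK_{(p,\past)}=\calK_p$, and $\calI_{(p,1,p)}^{\mathrm{sur}}=\calP_p^{\mathrm{dual}}$ noted just before the statement. There is nothing to add.
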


\end{section}

\begin{section}{Unconditionally and weakly $\pr$-null sequences}
{\bf 4.1. Unconditional and weak $\pr$-compactnesses.} The (uniformly) $\pr$-null sequences and $\pr$-compactness in a Banach space $X$ are defined in terms of $\pr$-convex hulls using the space $\ell_p(X)$ of absolutely $p$-summable sequences in $X$. In general, $\pr$-convex hulls can be defined using the space $\ell_p ^w(X)$ of weakly $p$-summable sequences in $X$. This is a pretty old idea, going back at least to the paper \cite[p.~51]{CSa} by Castillo and Sanchez in 1993. In \cite{CSa}, the $(p,\past)$-convex hull of $(x_n)\in \ell_p^w(X)$ was considered under the name of $\past$-convex hull of $(x_n)$. In 2002, Sinha and Karn \cite{SK1} developed some of their theory of $p$-compactness in a more general context of weak $p$-compactness. In \cite{SK1}, also the $(p,\past)$-convex hull of $(x_n)\in \ell_p^w(X)$ was used but under the name of $p$-convex hull of $(x_n)\in \ell_p^w(X)$.

Let $1\leq p <\infty$ and $1 \leq r \leq \past$. In the present Section 4, we shall assume that  the definition of the \emph{$\pr$-convex hull} $\pr$-conv$(x_n)$ (see Section 2.1) is extended to $(x_n)\in \ell_p^w(X)$. In this case, the operator $\Phi _{(x_n)}: \ell_r \ra X$ is also well defined and 
\[
(p,r)\textnormal{-conv}(x_n)=\Phi _{(x_n)}(B_{\ell_r}).
\]
But $\Phi_{(x_n)}$ need not be a compact operator any more (see, e.g., Section 4.3).

``Between'' absolutely $p$-summable sequences $\ell_p(X)$ and weakly $p$-summable sequences $\ell_p^w(X)$, there is the Banach space $\ell_p^u(X)$ of \emph{unconditionally $p$-summable sequences} (see, e.g., \cite[8.2,~8.3]{DF}; we follow \cite{BCFP} in our terminology). The space $\ell_p^u(X)$ is defined as the (closed) subspace of $\ell_p^w(X)$, formed by the $(x_n)\in \ell_p^w(X)$ satisfying $(x_n)= \lim _{N\ra \infty}(x_1, ... , x_N, 0,0,...)$ in $\ell_p^w(X)$. The space $\ell_p^u(X)$ was introduced and thoroughly studied by Fourie and Swart \cite{FS1} in 1979. In particular, it follows from \cite[Theorem~1.4]{FS1} that $\Phi _{(x_n)}$ is compact whenever $(x_n)\in \ell_p^u(X)$. In fact, $\Phi _{(x_n)}: \ell_{\past} \ra X$ is compact if and only if $(x_n)\in \ell_p^u(X)$ (see \cite[Theorem~1.4]{FS1} or, e.g., ~\cite[8.2]{DF}). 

It is rather easy to see that our approach in Sections 2 and 3 goes through if $\ell_p(X)$ is replaced with the larger space $\ell_p^u(X)$. Let us start by fixing the relevant terminology and notation.

We define \emph{relatively unconditionally} (respectively, \emph{weakly}) \emph{$\pr$-compact} sets in $X$ by replacing $\ell_p(X)$ with $\ell_p^u(X)$ (respectively, with $\ell_p^w (X)$) in the definition of relatively $\pr$-compact sets. The classes of corresponding sets in all Banach spaces are denoted, respectively, by $\bs u_\pr$ and $\bs w _\pr$. So that $\bs k _\pr \subset \bs u _\pr \subset \bs w _\pr $ and $\bs u _\pr \subset \bs k$. 

A linear operator $T: Y\ra X$ is \emph{unconditionally} (respectively, \emph{weakly}) \emph{$\pr$-compact} if $T(B_Y)$ is a relatively unconditionally (respectively, weakly) $\pr$-compact subset of $X$. Let $\calU_\pr$ and $\calW_\pr$ denote the classes of all unconditionally and weakly $\pr$-compact operators acting between arbitrary Banach spaces, so that $\calK_\pr \subset \calU_\pr \subset \calW_\pr$ and $\calU_\pr \subset \calK$. It is clear from the definitions that $\bs u_\pr = \calU_\pr (\bs b)$ and $\bs w_\pr =\calW_\pr (\bs b)$. An easy straightforward verification, as in the case of $\calK_\pr$ (cf. \cite[Propositions~2.1~and~2.2]{ALO}), shows that $\calU_\pr$ and $\calW_\pr$ are surjective operator ideals. 

Note that $\calW_{(p,\past)} = \calW_p$, the class of \emph{weakly $p$-compact operators}, studied in \cite{SK1}. Similarly, in all cases, we shall write ``$p$-'' instead of ``$(p,\past)$-'', and speak, for instance, about the operator ideal $\calU_p$ of unconditionally $p$-compact operators.
 
\medskip

{\bf 4.2. Unconditionally $\pr$-null sequences.} We define \emph{(uniformly) unconditionally $\pr$-null} sequences in $X$ by replacing $\ell_p(X)$ with $\ell_p^u(X)$ in the corresponding definitions of $\pr$-null and uniformly $\pr$-null sequences. The definition of the weak versions of these concepts will be given in Section 4.3;  it turns out to be unreasonably restrictive to define the weak versions just by replacing $\ell_p(X)$ with $\ell_p^w(X)$.

Let $(x_n)\in \ell_p^u(X)$. Then (see \cite[Lemma~1.2]{FS1}) $x_n=\delta_n y_n$ for some $(\delta _n)\in c_0$ and $(y_n)\in \ell_p^w(X)$. Since, clearly, 
\[
\Phi _{(x_n)}= \sum_{n=1}^\infty e_n \otimes x_n = \sum _{n=1}^\infty \delta _n e_n \otimes y_n
\]
(where $e_n\in \ell_r ^\ast $ are the unit vectors) and (as well known and easy to verify) $(e_n)\in B_{\ell_r^w (\ell_r ^\ast)}$, we have, by the definition of $(t,u,v)$-nuclear operators \cite[18.1.1]{P},
\[
\Phi _{(x_n)}\in \calN_{(\infty , \past, \rast)}(\ell_r, X).
\]
Similarly, as in Section 2.1, we get that
\[
\calU_\pr = \calN^{\mathrm{sur}}_{(\infty, \past, \rast)}.
\]
This implies that 
\[
\calU_\pr = \calK\circ \calU_\pr \circ \calK .
\]
Indeed, as in the proof of Proposition \ref{prop3.1}, $\calN_{(\infty, \past, \rast)}= \overline \calF \circ \calN_{(\infty, \past, \rast)} \circ \overline \calF$, and therefore 
\[
\calU_\pr =(\overline \calF \circ \calN_{(\infty, \past, \rast)}\circ \overline \calF)^\mathrm{sur}\subset \overline \calF ^\mathrm{sur} \circ \calN_{(\infty, \past, \rast)}^\mathrm{sur} \circ \overline \calF ^\mathrm{sur} = \calK \circ \calU_\pr \circ \calK,
\]
because $\overline \calF ^\mathrm{sur}=\calK$ (see, e.g., \cite[4.7.13]{P}).

Further, similarly to Proposition \ref{G-gen}, we have $\bs u_\pr =\calN_{(\infty, \past, \rast)}(\bs b) = \calU_\pr (\bs b)$, which implies (cf. Proposition \ref{prop3.1} and its proof) that $\bs u_\pr = \calN_{(\infty, \past, \rast)} (\bs k)=\calU_\pr (\bs k)$. Using the above facts and proceeding as in the proof of Theorem \ref{omni}, we come to the omnibus characterization of unconditionally $\pr$-null sequences.

\begin{thm}\label{omni_uncond}
Let $1 \leq p < \infty$ and $1 \leq r \leq \past$. For a sequence $(x_n)$ in a Banach space $X$ the following statements are equivalent:
\begin{enumerate}
\item $(x_n)$ is unconditionally $(p,r)$-null,
\item $(x_n)$ is null and relatively unconditionally $(p,r)$-compact,
\item $(x_n)$ is null and $\calN_{(\infty, \past, \rast)}$-compact,
\item $(x_n)$ is null and $\calU_{(p,r)}$-compact,
\item $(x_n)$ is $\calN_{(\infty, \past, \rast )}$-null,
\item $(x_n)$ is $\calU_{(p,r)}$-null,
\item $(x_n)$ is uniformly unconditionally $(p,r)$-null.
\end{enumerate}
\end{thm}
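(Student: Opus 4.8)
The plan is to imitate, almost verbatim, the proof of Theorem \ref{omni}, replacing the operator ideal $\calN_{(p,1,\rast)}$ by $\calN_{(\infty,\past,\rast)}$, the surjective hull $\calK_\pr=\calN_{(p,1,\rast)}^{\mathrm{sur}}$ by $\calU_\pr=\calN_{(\infty,\past,\rast)}^{\mathrm{sur}}$, and the space $\ell_p(X)$ by $\ell_p^u(X)$ throughout, using the structural facts assembled in Section~4.1 (namely $\bs u_\pr=\calN_{(\infty,\past,\rast)}(\bs b)=\calU_\pr(\bs b)$, the minimality identity $\calN_{(\infty,\past,\rast)}=\overline\calF\circ\calN_{(\infty,\past,\rast)}\circ\overline\calF$ giving $\bs u_\pr=\calN_{(\infty,\past,\rast)}(\bs k)=\calU_\pr(\bs k)$, and the fact that $\calU_\pr$ is a surjective operator ideal). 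First I would establish the cycle of easy equivalences: (b)$\Leftrightarrow$(c)$\Leftrightarrow$(d) is immediate from $\bs u_\pr=\calN_{(\infty,\past,\rast)}(\bs k)=\calU_\pr(\bs k)$; and (c)$\Leftrightarrow$(e), (d)$\Leftrightarrow$(f) are immediate from Theorem \ref{LT} applied to the operator ideals $\calN_{(\infty,\past,\rast)}$ and $\calU_\pr$ respectively.

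Next I would do the three ``genuine'' implications. For (a)$\Rightarrow$(b): given that $(x_n)$ is unconditionally $\pr$-null, for each $\e>0$ there are $N\in\N$ and $(z_k)\in\ell_p^u(X)$ with $\norm{(z_k)}_p^w\leq\e$ such that $x_n=\sum_k a_k^n z_k$ with $(a_k^n)_k\in B_{\ell_r}$ for $n\geq N$; the same estimate $\norm{x_n}\leq\norm{(a_k^n)_k}_{\past}\norm{(z_k)}_p^w\leq\e$ as in the proof of Theorem \ref{omni} gives $x_n\to0$, and splicing the finitely many initial terms $x_1,\dots,x_{N-1}$ in front of $(z_k)$ produces a sequence $(y_k)\in\ell_p^u(X)$ (one must just check the spliced sequence still lies in $\ell_p^u(X)$, which is clear since $\ell_p^u(X)$ is a closed subspace of $\ell_p^w(X)$ stable under such finite modifications) with $x_n\in\pr$-conv$(y_k)$ for all $n$, i.e.\ $(x_n)$ is relatively unconditionally $\pr$-compact. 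For (f)$\Rightarrow$(g): if $(x_n)$ is $\calU_\pr$-null, write $x_n=Ty_n$ with $(y_n)$ null in some $Y$ and $T\in\calU_\pr(Y,X)$; unconditional $\pr$-compactness of $T$ gives $(w_k)\in\ell_p^u(X)$ with $T(B_Y)\subset\pr$-conv$(w_k)$, and then setting $(z_k):=(w_k/\norm{(w_k)}_p^w)\in B_{\ell_p^u(X)}$ and choosing, for given $\e>0$, $N$ with $Ty_n\in(\e/\norm{(w_k)}_p^w)T(B_Y)$ for $n\geq N$ yields $x_n\in\e\,\pr$-conv$(z_k)$ as in Theorem \ref{omni}. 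Finally (g)$\Rightarrow$(a) is trivial from the definitions, since scaling $(z_k)\in B_{\ell_p^u(X)}$ by $\e$ keeps it in $\ell_p^u(X)$ and $\pr$-conv$(\e z_k)=\e\,\pr$-conv$(z_k)$.

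The potential subtlety — and the only place where one cannot mechanically copy Section~3 — is making sure that $\ell_p^u(X)$ behaves well under the two operations used: (i) splicing finitely many arbitrary vectors of $X$ in front of an element of $\ell_p^u(X)$, and (ii) normalizing. Operation (ii) is immediate since $\ell_p^u(X)$ is a subspace. For (i), a finitely supported sequence is trivially unconditionally $p$-summable, the sum of two elements of $\ell_p^u(X)$ is again in $\ell_p^u(X)$, and a finite shift preserves the defining condition $(y_k)=\lim_N(y_1,\dots,y_N,0,\dots)$ in $\norm{\cdot}_p^w$; so the spliced sequence $(y_k)$ indeed lies in $\ell_p^u(X)$. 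One should also double-check the norm estimate in (a)$\Rightarrow$(b) uses only $\norm{\cdot}_p^w$ (the inequality $\norm{\sum_k a_k z_k}\leq\norm{(a_k)}_{\past}\norm{(z_k)}_p^w$ is exactly the standard duality estimate for weakly $p$-summable sequences), so the argument is unaffected. All other steps are formal consequences of the ideal-theoretic identities for $\calN_{(\infty,\past,\rast)}$ and $\calU_\pr$ recorded just before the statement, together with Theorem \ref{LT}. I would present the proof simply as: ``The proof proceeds exactly as that of Theorem \ref{omni}, using the facts established in Section~4.1 in place of Propositions \ref{G-gen} and \ref{prop3.1}, and $\ell_p^u(X)$ in place of $\ell_p(X)$,'' spelling out only (a)$\Rightarrow$(b), (f)$\Rightarrow$(g), and (g)$\Rightarrow$(a) as above, and flagging the closedness/stability of $\ell_p^u(X)$ as the one point that needs the extra remark.
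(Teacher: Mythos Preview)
Your proposal is correct and follows essentially the same approach as the paper: the paper likewise declares the proof to be the verbatim version of the proof of Theorem~\ref{omni}, singling out only the norm estimate in (a)$\Rightarrow$(b) (which it writes via $\norm{x_n}=\sup_{\xast\in B_{\Xast}}|\xast(x_n)|\leq\norm{(a_k^n)_k}_r\norm{(z_k)}_p^w$) as the step requiring comment. You are slightly more careful than the paper in explicitly checking that splicing finitely many vectors and normalizing keep sequences inside $\ell_p^u(X)$, points the paper takes for granted.
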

\begin{proof}
It is mostly the verbatim version of the proof of Theorem \ref{omni}. Only the claim that $(x_n)$ is null whenever $(x_n)$ is unconditionally $\pr$-null (see the implication (a)$\Rightarrow $(b)) needs to be commented (also for an easy reference in Section 4.3 below).

So, let $(x_n)$ be unconditionally $\pr$-null. Then, as in the proof of (a)$\Rightarrow$(b) in Theorem \ref{omni}, for every $\e>0$ there are $N\in \N$ and $(z_k)\in \ell_p^u(X)$, $\norm{(z_k)}_p^w \leq \e$, such that $x_n= \sum _{k=1}^\infty a_k^n z_k$, where $(a_k^n)_{k=1}^\infty\in B_{\ell_r}$, for all $n \geq N$. Hence, 
\[
\norm{x_n}=\sup_{\xast \in B_{\Xast}} \vert \xast (x_n)\vert \leq \sup_{\xast \in B_{\Xast}} \sum _{k=1}^\infty \vert a_k^n  \xast (z_k) \vert \leq \norm{(a_k^n)_k}_r \norm{(z_k)}_p^w \leq \e,
\]
for all $n\geq N$, and therefore $x_n\ra 0$.
\end{proof}

Recall (see \cite[Theorem~2.5]{FS2} or, e.g., \cite[18.3.2]{P}) that $\calN_{(\infty, p, \past)}$ coincides with the operator ideal $\mathrm K_p$ of \emph{classical} $p$-compact operators. Following Fourie and Swart \cite{FS1} or Pietsch \cite[18.3.1~and~18.3.2]{P}, a linear operator $T: Y \ra X$ is called \emph{$p$-compact}, i.e., $T\in \mathrm K_p(Y,X)$, if there exist $A\in \calK(Y, \ell_p)$ and $B\in \calK(\ell_p, X)$ such that $T=BA$. Remark (see \cite{O-JM} and \cite{P2}) that $\calK_p$ and $\mathrm K_p$ are notably different as operator ideals. 

Since $\calU_{\past}= \calU_{(\past, p)}= \calN_{(\infty, p, \past)}^{\mathrm{sur}}$, we get that $ \mathrm K_{p}^{\mathrm{sur}} = \calU_{\past}$ as a description of the surjective hull of $\mathrm K_p$.

Let us spell out, from Theorem \ref{omni_uncond}, an omnibus characterization of unconditionally $p$-null (i.e., $(p,\past)$-null) sequences.

\begin{cor}
Let $1 \leq p < \infty$. For a sequence $(x_n)$ in a Banach space $X$ the following statements are equivalent:
\begin{enumerate}
\item $(x_n)$ is unconditionally $p$-null,
\item $(x_n)$ is null and relatively unconditionally $p$-compact,
\item $(x_n)$ is null and $\mathrm K_{\past}$-compact,
\item $(x_n)$ is null and $\calU_p$-compact,
\item $(x_n)$ is $\mathrm K_{\past}$-null,
\item $(x_n)$ is $\calU_p$-null,
\item $(x_n)$ is uniformly unconditionally $p$-null.
\end{enumerate}
\end{cor}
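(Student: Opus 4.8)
The plan is to deduce this Corollary directly from Theorem~\ref{omni_uncond} by specializing to the ``limit'' index $r = \past$, rather than re-running the omnibus argument from scratch. With this choice one has $\rast = p$, so by the conventions fixed in Section~4 the phrases ``unconditionally $(p,\past)$-null'', ``relatively unconditionally $(p,\past)$-compact'', and the operator ideals $\calU_{(p,\past)}$ and $\calN_{(\infty,\past,p)}$ become, respectively, ``unconditionally $p$-null'', ``relatively unconditionally $p$-compact'', $\calU_p$, and $\calN_{(\infty,\past,p)}$. Thus items (a), (b), (d), (f), (g) of Theorem~\ref{omni_uncond} already coincide, essentially verbatim, with items (a), (b), (d), (f), (g) of the Corollary; nothing is left to prove for those.

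It then remains only to rewrite items (c) and (e) of Theorem~\ref{omni_uncond} in the form stated in the Corollary. For this I would invoke the classical identification $\calN_{(\infty, p, \past)} = \mathrm{K}_p$ recalled above from \cite{FS2} and \cite[18.3.2]{P}. Substituting $p$ by $\past$ there, and using $(\past)^\ast = p$, gives $\calN_{(\infty, \past, p)} = \mathrm{K}_{\past}$ as operator ideals. Consequently ``$\calN_{(\infty,\past,p)}$-compact'' is the same as ``$\mathrm{K}_{\past}$-compact'' and ``$\calN_{(\infty,\past,p)}$-null'' is the same as ``$\mathrm{K}_{\past}$-null'', so plugging these equalities into items (c) and (e) of Theorem~\ref{omni_uncond} produces exactly items (c) and (e) of the Corollary. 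Chaining the equivalences of Theorem~\ref{omni_uncond} through these substitutions closes the argument.

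I do not expect a genuine obstacle here: all the substance is already contained in Theorem~\ref{omni_uncond} and in the standard equality $\calN_{(\infty, p, \past)} = \mathrm{K}_p$. The only point that calls for a little care is the index bookkeeping when passing from the $(p,r)$-notation with $r=\past$ to the abbreviated ``$p$-'' notation — in particular verifying that the third parameter $\rast$ of the nuclearity class equals $p$, so that it is the \emph{shifted} identity $\calN_{(\infty,\past,p)} = \mathrm{K}_{\past}$, and not $\calN_{(\infty,p,\past)} = \mathrm{K}_p$ itself, that must be applied in items (c) and (e).
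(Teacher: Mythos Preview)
Your proposal is correct and matches the paper's approach essentially verbatim: the paper also presents this Corollary as a direct specialization of Theorem~\ref{omni_uncond} to $r=\past$, invoking the identification $\calN_{(\infty,p,\past)}=\mathrm{K}_p$ (hence $\calN_{(\infty,\past,p)}=\mathrm{K}_{\past}$) to rewrite items (c) and (e). Your careful remark about the index bookkeeping for the nuclearity class is exactly the point the paper makes implicitly in the paragraph preceding the Corollary.
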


\medskip
{\bf 4.3. Weakly $\pr$-null sequences and weakly $\calA$-null sequences.} Let $1\leq p <\infty$ and $1\leq r \leq \past$, as before. What about the weakly $\pr$-null sequences? It would be natural to expect that they would form a subclass of weakly null sequences, but not a subclass of null sequences as in the case of $\pr$-null sequences (which might be called also absolutely $\pr$-null sequences) or unconditionally $\pr$-null sequences. This means that we cannot employ the ``verbatim'' definition: replacing $\ell_p(X)$ with $\ell_p^w(X)$. 

Indeed (see the proof of Theorem \ref{omni_uncond}), such a ``weakly'' $\pr$-null sequence would always be a null sequence. And, for instance, looking at $X=\ell_{\past}$, every null sequence $(x_n)$ in $X$ would be uniformly ``weakly'' $(p,\past)$-null, because the unit vector basis $(e_k)$ of $X$ belongs to $B_{\ell_p^w(X)}$ and, since $\Phi_{(e_k)}=I_X$, we have $x_n= \Phi_{(e_k)}x_n \in \norm{x_n} p \textnormal{-conv}(e_k)$.

To motivate a definition for weakly $\pr$-null sequences, let us make the following observation from Theorem \ref{omni}, yielding two more characterizations of $\pr$-null sequences.

\begin{prop}\label{prop4.3}
Let $1 \leq p < \infty$ and $1 \leq r \leq \past$. For a sequence $(x_n)$ in a Banach space $X$ the following statements are equivalent:
\begin{enumerate}
\renewcommand{\theenumi}{\roman{enumi}}
\renewcommand{\labelenumi}{(\theenumi)}
\item $(x_n)$ is $(p,r)$-null,
\item for every $\e >0$ there exist $(z_k)\in \ell_p(X)$ and $N\in \N$ such that $\norm{x_n}\leq \e$ and $x_n\in \pr \textnormal{-conv}(z_k)$ for all $n\geq N$,
\item there exists $(z_k)\in \ell_p(X)$ with the following property: for every $\e>0$ there exists $N\in \N$ such that $\norm{x_n}\leq \e$ and $x_n \in \pr \textnormal{-conv}(z_k)$ for all $n\geq N$.
\end{enumerate}
\end{prop}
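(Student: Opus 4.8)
The plan is to prove the cycle $(i)\Rightarrow(iii)\Rightarrow(ii)\Rightarrow(i)$, using Theorem~\ref{omni} — and in particular its equivalence with the uniform version $(g)$ — as the main tool. The only implication with any content is $(i)\Rightarrow(iii)$, where one must manufacture a \emph{single} sequence $(z_k)$ that works for every $\e$ simultaneously; this is exactly what ``uniformly $\pr$-null'' buys us.

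For $(i)\Rightarrow(iii)$, I would invoke the equivalence $(a)\Leftrightarrow(g)$ of Theorem~\ref{omni}: since $(x_n)$ is $\pr$-null, it is uniformly $\pr$-null, so there is $(z_k)\in B_{\ell_p(X)}$ such that for each $\e>0$ one finds $N\in\N$ with $x_n\in\e\,\pr\textnormal{-conv}(z_k)$ for all $n\ge N$. This one $(z_k)$ already witnesses $(iii)$. Indeed, $\pr\textnormal{-conv}(z_k)=\Phi_{(z_k)}(B_{\ell_r})$ is a balanced convex set, so $\e\,\pr\textnormal{-conv}(z_k)\subseteq\pr\textnormal{-conv}(z_k)$ whenever $0<\e\le1$; and from $x_n\in\e\,\pr\textnormal{-conv}(z_k)$ together with the Hölder estimate $\norm{\Phi_{(z_k)}}\le\norm{(z_k)}_p\le1$ (valid because $r\le\past$ yields $\norm{\cdot}_{\past}\le\norm{\cdot}_r$ on scalar sequences, exactly the computation used for $(a)\Rightarrow(b)$ in the proof of Theorem~\ref{omni}) one gets $\norm{x_n}\le\e$. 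For $\e>1$ one simply reuses the $N$ obtained for $\e=1$. Hence $(iii)$ holds.

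The implication $(iii)\Rightarrow(ii)$ is immediate, since the $(z_k)$ produced in $(iii)$ does not depend on $\e$ and so also serves in $(ii)$. For $(ii)\Rightarrow(i)$ I would check that $(x_n)$ satisfies condition $(b)$ of Theorem~\ref{omni} and then apply $(b)\Rightarrow(a)$. That $(x_n)$ is null is built into $(ii)$ (eventually $\norm{x_n}\le\e$, for every $\e>0$). Relative $\pr$-compactness follows by the splicing argument from the proof of $(a)\Rightarrow(b)$ in Theorem~\ref{omni}: taking $\e=1$ in $(ii)$ gives $(z_k)\in\ell_p(X)$ and $N$ with $\{x_N,x_{N+1},\dots\}\subseteq\pr\textnormal{-conv}(z_k)$, and then the interlaced sequence $(x_1,\dots,x_{N-1},z_1,z_2,\dots)$ lies in $\ell_p(X)$ and has every $x_n$ in its $\pr$-convex hull.

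I do not anticipate a genuine obstacle: the two slightly delicate points — balancedness of $\pr\textnormal{-conv}(z_k)$, so that scaling down by $\e\le1$ keeps us inside the hull, and the Hölder inequality $\norm{\cdot}_{\past}\le\norm{\cdot}_r$ — are both already in use earlier in the paper, and all the real work is absorbed into Theorem~\ref{omni}, especially the equivalence with the uniform property $(g)$.
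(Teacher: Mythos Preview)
Your proposal is correct and follows essentially the same approach as the paper: both arguments reduce everything to Theorem~\ref{omni}, using the uniform property~(g) to manufacture the single sequence $(z_k)$ in~(iii), the H\"older estimate from the proof of (a)$\Rightarrow$(b) to get $\norm{x_n}\le\e$, and the splicing construction to recover relative $\pr$-compactness for the passage back to~(i) via~(b). The only cosmetic difference is the order of the cycle---the paper runs (i)$\Rightarrow$(ii), then (ii)$\Rightarrow$(i) and (ii)$\Rightarrow$(iii), while you run (i)$\Rightarrow$(iii)$\Rightarrow$(ii)$\Rightarrow$(i)---but the ingredients and their roles are identical.
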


\begin{proof}
The implication (i)$\Rightarrow$(ii) is clear from the proof of Theorem \ref{omni}, the first part of (a)$\Rightarrow$(b). 

From (ii), it is clear that $x_n\ra 0$, and also (fixing, e.g., $\e=1$ and looking at the proof of Theorem \ref{omni}, the second part of (a)$\Rightarrow$(b)) that $(x_n)$ is relatively $\pr$-compact. By Theorem \ref{omni}, (b)$\Rightarrow $(a), $(x_n)$ is $\pr$-null, meaning that (ii)$\Rightarrow $(i). By Theorem \ref{omni}, (b)$\Rightarrow$(g), $(x_n)$ is uniformly $\pr$-null. Hence, assuming that $\e\leq 1$, condition (iii) holds (similarly to the implication (i)$\Rightarrow$(ii) above).

Finally, (iii)$\Rightarrow$(ii) is more than obvious, and we saw above that (ii)$\Leftrightarrow $(i).
\end{proof}

Looking at Proposition \ref{prop4.3}, it seems to be natural to make the following definitions.

Let $(x_n)$ be a sequence in a Banach space $X$. We call $(x_n)$ \emph{weakly $\pr$-null} if for every $\xast \in\Xast$ and every $\e>0$ there exist $(z_k)\in \ell_p^w(X)$ and $N\in \N$ such that $\vert \xast(x_n) \vert \leq \e$ and $x_n\in \pr \textnormal{-conv}(z_k)$ for all $n\geq N$. We call $(x_n)$ \emph{uniformly weakly $\pr$-null} if there exists $(z_k)\in \ell_p^w(X)$ with the following property: for every $\xast \in \Xast$ and every $\e>0$ there exists $N\in \N$ such that $\vert \xast (x_n)\vert\leq \e$ and $x_n \in \pr \textnormal{-conv}(z_k)$ for all $n\geq N$.

Let $\calA$ be an operator ideal. In the present context, it would be natural to complement the Carl--Stephani theory with the concepts of weakly $\calA$-null sequences and weakly $\calA$-compact sets as follows. 

We call a sequence $(x_n)$ in a Banach space $X$ \emph{weakly $\calA$-null} if there exist a Banach space $Y$, a weakly null sequence $(y_n)$ in $Y$, and $T\in\calA(Y, X)$ such that $x_n =T y_n$ for all $n \in \N$. We say that a subset $K$ of $X$ is \emph{weakly $\calA$-compact} if $K$ is of type $\calA(\bs w)$, i.e., $K \in \calA(\bs w)(X)$. (Recall that $\bs w$ denotes the class of all relatively weakly compact sets.)

Two basic facts in the Carl--Stephani theory \cite{CS} are that the classes of $\calA$-null and $\calA^\mathrm{sur}$-null sequences coincide, and so also do $\calA$-compact and $\calA^\mathrm{sur}$-compact sets. The ``weak'' versions of these results do not hold. 

Indeed, let $\calV$ denote the operator ideal of \emph{completely continuous} operators, i.e., of operators who take weakly null sequences to null sequences. Then $\calV ^\mathrm{sur}=\calL$ (see, e.g., \cite[4.7.13]{P}). Consequently, the weakly $\calV$-null sequences are (precisely, because null sequences are $\calK$-null, hence $\calV$-null) the null sequences, but the weakly $\calV^\mathrm{sur}$-null sequences are precisely the weakly null sequences. Similarly, the weakly $\calV$-compact sets are precisely relatively compact:
\[
\calV(\bs w)=\calV ( \calW ( \bs b))= (\calV \circ \calW )( \bs b) = \calK(\bs b)= \bs k 
\]
(see Remark \ref{rem2.3} for the equality $\bs w= \calW(\bs b)$ and, e.g., \cite[3.1.3]{P} for the equality $\calV\circ \calW= \calK$). But $\calV ^\mathrm{sur}(\bs w)= \bs w$.

However, for our purposes, the following analogue of the Lassalle--Turco Theorem \ref{LT}, characterizing \emph{weakly} $\calA$-null sequences, will be sufficient.

\begin{prop}\label{prop4.4}
Let $\calA$ be an operator ideal and let $(x_n)$ be a sequence in a Banach space $X$.
\begin{enumerate}
\item If $(x_n)$ is weakly $\calA$-null, then $(x_n)$ is weakly null and weakly $\calA$-compact.
\item If $(x_n)$ is weakly null and weakly $\calA$-compact, then $(x_n)$ is weakly $\calA^\mathrm{sur}$-null.
\end{enumerate}

In particular, if $\calA$ is surjective, then $(x_n)$ is weakly $\calA$-null if and only if $(x_n)$ is weakly null and weakly $\calA$-compact.
\end{prop}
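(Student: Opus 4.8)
The plan is to establish (a) and (b) separately; the ``in particular'' clause is then immediate, since $\calA=\calA^\mathrm{sur}$ when $\calA$ is surjective.

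Part (a) presents no difficulty. If $x_n=Ty_n$ with $(y_n)$ weakly null in $Y$ and $T\in\calA(Y,X)$, then $T$, being bounded, is weak-to-weak continuous, so $(x_n)$ is weakly null; moreover, the range of a weakly null sequence together with its limit is weakly compact by Eberlein--\v{S}mulian, so $F:=\{y_n:n\in\N\}\cup\{0\}\in\bs w(Y)$, and $\{x_n\}=T(\{y_n\})\subset T(F)$ exhibits $\{x_n\}$ as a set of type $\calA(\bs w)$, i.e.\ as weakly $\calA$-compact.

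For (b), the crux -- and the main obstacle, compared with the Carl--Stephani arguments behind Theorem~\ref{LT} -- is that a bounded sequence in the domain of an operator need not be weakly null, so one cannot simply pull $(x_n)$ back through $Y$. I would get around this by manufacturing a \emph{reflexive} domain. Suppose $(x_n)$ is weakly null and weakly $\calA$-compact, say $\{x_n\}\subset T(D)$ with $D\in\bs w(Y)$ and $T\in\calA(Y,X)$, and choose $w_n\in D$ with $Tw_n=x_n$. Let $W$ be the weakly closed absolutely convex hull of $D$; since $D$ is relatively weakly compact, $W$ is weakly compact by the Krein--\v{S}mulian theorem. Applying the Davis--Figiel--Johnson--Pe\l czy\'{n}ski construction to the weakly compact absolutely convex set $W$ yields a reflexive Banach space $R$ with the (bounded) formal identity $\kappa\colon R\to Y$, in such a way that $(w_n)$ becomes a bounded sequence $(\zeta_n)$ in $R$ with $\kappa\zeta_n=w_n$. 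By the ideal property $T\kappa\in\calA(R,X)$, and $(T\kappa)\zeta_n=x_n$.

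Finally, I would pass to the quotient by the kernel to obtain an injective operator. Put $N:=\ker(T\kappa)$, $Z:=R/N$, and let $\pi\colon R\to Z$ be the quotient map; then $T\kappa=\widehat S\pi$ for a bounded injective $\widehat S\colon Z\to X$. Since $\widehat S\pi=T\kappa\in\calA$ and $\pi$ is a surjection, the definition of the surjective hull gives $\widehat S\in\calA^\mathrm{sur}(Z,X)$. With $z_n:=\pi\zeta_n$ we have $\widehat S z_n=x_n$ and $(z_n)$ bounded in $Z$. Now $Z$, being a quotient of the reflexive space $R$, is reflexive; and $(z_n)$ is weakly null in $Z$, because any weakly convergent subsequence $z_{n_k}\rightharpoonup z$ gives $x_{n_k}=\widehat S z_{n_k}\rightharpoonup\widehat S z$, hence $\widehat S z=0$ (as $(x_n)$ is weakly null), hence $z=0$ by injectivity of $\widehat S$; since by reflexivity and Eberlein--\v{S}mulian every subsequence of the bounded sequence $(z_n)$ has a weakly convergent subsequence, and all of these converge to $0$, it follows that $z_n\rightharpoonup 0$. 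Thus $(x_n)$ is weakly $\calA^\mathrm{sur}$-null, proving (b). Note that the quotient $\pi$ cannot in general be removed -- this is precisely why one lands in $\calA^\mathrm{sur}$ rather than $\calA$, in line with the failure of the naive weak analogues discussed above.
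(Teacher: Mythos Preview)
Your proof is correct; part~(a) matches the paper. For~(b) you take a genuinely different route. The paper avoids the Davis--Figiel--Johnson--Pe\l{}czy\'{n}ski construction entirely: after arranging $0\in K$, it factors $T=\overline T q$ directly through $Z:=Y/\ker T$, so that $\overline T\in\calA^{\mathrm{sur}}(Z,X)$, and then observes that since $q(K)$ is weakly compact and $\overline T|_{q(K)}$ is a continuous bijection onto the Hausdorff space $T(K)$, it is a homeomorphism for the relative weak topologies; the preimages $z_n=\overline T^{-1}x_n\in q(K)$ then converge weakly to $\overline T^{-1}(0)=0$ simply by continuity of the inverse. Your approach substitutes the structural input of reflexivity for this topological observation: interpolating a reflexive $R$ via DFJP lets you invoke Eberlein--\v{S}mulian on bounded sequences in the reflexive quotient $Z=R/N$, with injectivity of $\widehat S$ pinning down the limit. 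Both arguments land in $\calA^{\mathrm{sur}}$ for the same reason (the quotient map), but the paper's compact--Hausdorff homeomorphism trick is more elementary, while your route trades a one-line topological observation for a heavier though perhaps more widely familiar tool.
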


\begin{proof}
(a) We have $x_n= Ty_n$ for some $T \in \calA(Y,X)$ and weakly null sequence $(y_n)$ in $Y$. Hence $(x_n)$ is weakly null. Since $(y_n)$ is relatively weakly compact in $Y$, $(x_n)$ is weakly $\calA$-compact.

(b) We know that $(x_n)\subset T(K)$ for some $T\in \calA(Y,X)$ and weakly compact subset $K$ of $Y$. We may and shall assume that $0 \in K$. Denote by $\overline T$ the injective associate of $T$. Then $T=\overline T q$, where $q: Y\ra Z:=Y/\ker T$ is the quotient mapping, and $\overline T \in \calA^\mathrm{sur}(Z, X)$ (by the definition of $\calA^\mathrm{sur}$). 

If $q(K)$ and $\overline T(q(K))=T(K)$ are endowed with their weak topologies from $Z$ and $X$, respectively, then $\overline T: q(K)\ra T(K)$ is a continuous bijection, hence a homeomorphism. Let $x_n=Tk_n=\overline Tqk_n$ for some $k_n\in K$ and let $z_n=qk_n$. Then $z_n=\overline T ^{-1}x_n \ra \overline T^{-1}(0)=0$ weakly (recall that $0\in K$ and $(x_n)$ is weakly null by the assumption). Since $x_n=\overline T z_n$ for all $n\in \N$, $(x_n)$ is weakly $\calA^\mathrm{sur}$-null.
\end{proof}

We saw (in Sections 2.2, 2.3, 4.1, 4.2) that $\bs k _\pr = \calK_\pr (\bs b)= \calK_\pr (\bs k)$ and, similarly, $\bs u _\pr = \calU_\pr (\bs b) = \calU_\pr (\bs k)$. Also $\bs w_\pr =\calW_\pr (\bs b)$ (see Section 4.1). In general, $\calW_\pr (\bs b) \neq \calW_\pr (\bs k)$. Indeed, as was mentioned in the beginning of Section 4.3, for $X=\ell_{\past}$, one has $\Phi _{(e_k)}=I_X$. Hence, $\calW_p(X,X)=\calL(X,X)$ and therefore $\calW_p(\bs b)(X)=\bs b(X)$, but $\calW_p(\bs k)(X)=\bs k(X)$. We shall need the fact that in many cases $\calW _\pr (\bs b)= \calW_\pr (\bs w)$.

\begin{prop}\label{prop4.5}
Let $1\leq p < \infty$ and $1 < r \leq \past$ with $r< \infty$ if $p=1$. Then 
\[
\calW_\pr = \calW_\pr \circ \calW \text{ and } \bs w_\pr =\calW_\pr (\bs w).
\]
\end{prop}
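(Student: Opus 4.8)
The plan is to deduce both identities from the single fact that, under the present hypotheses on $(p,r)$ --- namely $1<r\le\past$ with $r<\infty$ when $p=1$ --- the space $\ell_r$ is reflexive (these hypotheses say exactly that $1<r<\infty$). Two of the four required inclusions are then free: since $\calW_\pr$ is an operator ideal, $\calW_\pr\circ\calW\subseteq\calW_\pr$; and trivially $\calW_\pr(\bs w)\subseteq\calW_\pr(\bs b)=\bs w_\pr$, the last equality having been noted in Section 4.1. So the content lies in the reverse inclusions.

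First I would settle the short half $\bs w_\pr\subseteq\calW_\pr(\bs w)$. Let $K\in\bs w_\pr(X)$; by definition there is $(x_n)\in\ell_p^w(X)$ with $K\subseteq\Phi_{(x_n)}(B_{\ell_r})=(p,r)\textnormal{-conv}(x_n)$. Since $\ell_r$ is reflexive, $B_{\ell_r}$ is relatively weakly compact, i.e.\ $B_{\ell_r}\in\bs w(\ell_r)$; and $\Phi_{(x_n)}\in\calW_\pr(\ell_r,X)$ because $\Phi_{(x_n)}(B_{\ell_r})=(p,r)\textnormal{-conv}(x_n)$ is relatively weakly $\pr$-compact. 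Hence $K\in\calW_\pr(\bs w)(X)$, and $\bs w_\pr=\calW_\pr(\bs w)$.

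Next, for $\calW_\pr\subseteq\calW_\pr\circ\calW$, let $T\in\calW_\pr(Y,X)$. Exactly as in Section 2.1, but with $\ell_p^w(X)$ in place of $\ell_p(X)$, there is $(x_n)\in\ell_p^w(X)$ with $T(B_Y)\subseteq\Phi_{(x_n)}(B_{\ell_r})$, and writing $Z:=\ell_r/\ker\Phi_{(x_n)}$, $q\colon\ell_r\to Z$ for the quotient map and $\overline\Phi_{(x_n)}\colon Z\to X$ for its injective associate, one has $T=\overline\Phi_{(x_n)}S$ for some $S\in\calL(Y,Z)$. Now $Z$ is a quotient of the reflexive space $\ell_r$, hence reflexive, so $S$, being a bounded operator into a reflexive space, is weakly compact: $S\in\calW(Y,Z)$. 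On the other hand, reflexivity of $\ell_r$ makes $B_{\ell_r}$ weakly compact, so $q(B_{\ell_r})$ is a weakly compact (hence norm-closed, convex) subset of $Z$ caught between the open and closed unit balls of $Z$; therefore $q(B_{\ell_r})=B_Z$ and $\overline\Phi_{(x_n)}(B_Z)=\Phi_{(x_n)}(B_{\ell_r})=(p,r)\textnormal{-conv}(x_n)$, which is relatively weakly $\pr$-compact since $(x_n)\in\ell_p^w(X)$. Thus $\overline\Phi_{(x_n)}\in\calW_\pr(Z,X)$ and $T=\overline\Phi_{(x_n)}S\in(\calW_\pr\circ\calW)(Y,X)$, so $\calW_\pr=\calW_\pr\circ\calW$. (The identity $\bs w_\pr=\calW_\pr(\bs w)$ also follows from this formally, via $\bs w_\pr=\calW_\pr(\bs b)=(\calW_\pr\circ\calW)(\bs b)=\calW_\pr(\calW(\bs b))=\calW_\pr(\bs w)$, using $\bs w=\calW(\bs b)$ from Remark \ref{rem2.3} and the associativity already exploited in the proofs of Propositions \ref{prop3.1} and \ref{prop3.3}.)

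I expect the delicate point to be the equality $q(B_{\ell_r})=B_Z$ --- and with it $\overline\Phi_{(x_n)}(B_Z)=(p,r)\textnormal{-conv}(x_n)$ exactly, not merely up to closure. This is precisely where reflexivity of $\ell_r$ is indispensable: for non-reflexive $\ell_r$ (the case $p=1$, $r=\infty$, i.e.\ $\ell_r=c_0$, excluded by hypothesis) the quotient map need not carry the closed unit ball onto $B_Z$, and $\Phi_{(x_n)}$ need not even be weakly compact, so the method collapses. The remaining ingredients --- the factorization $T=\overline\Phi_{(x_n)}S$ (copied verbatim from Section 2.1), reflexivity of quotients of reflexive spaces, and the elementary set arithmetic of the last identity --- are routine.
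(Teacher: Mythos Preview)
Your proof is correct and follows the same route as the paper: factor $T=\overline\Phi_{(x_n)}S$ through $Z=\ell_r/\ker\Phi_{(x_n)}$, use reflexivity of $\ell_r$ (hence of $Z$) to make $S$ weakly compact, place $\overline\Phi_{(x_n)}$ in $\calW_\pr$, and then derive the set identity formally via $\bs w_\pr=\calW_\pr(\bs b)=(\calW_\pr\circ\calW)(\bs b)=\calW_\pr(\bs w)$. The only cosmetic difference is that, to see $\overline\Phi_{(x_n)}\in\calW_\pr(Z,X)$, the paper simply invokes the surjectivity of the ideal $\calW_\pr$ (noted in Section~4.1), whereas you argue directly that $q(B_{\ell_r})=B_Z$ from weak compactness of $B_{\ell_r}$; your separate first proof of $\bs w_\pr\subseteq\calW_\pr(\bs w)$ is thus redundant but harmless.
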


\begin{proof}
Let $X$ and $Y$ be Banach spaces and $T \in \calW_\pr(Y,X)$. As in  the case of $\calW_p$ in \cite[pp.~20--21]{SK1} and of $\calK_\pr$ (see Section 2.1), we get a natural factorization $T=\overline \Phi _{(x_n)} S$ with $(x_n)\in \ell_p^w(X)$, where $\overline \Phi_{(x_n)}$ is the injective associate of $\Phi_{(x_n)}$ and $S \in \calL(Y,Z)$, where $Z:= \ell_r / \ker \Phi_{(x_n)}$. Since $\Phi_{(x_n)} \in \calW_\pr (\ell_r, X)$, we have $\overline \Phi _{(x_n)}\in \calW_\pr ^\mathrm{sur}(Z,X) = \calW_\pr (Z,X)$, because $\calW_\pr$ is surjective. Since $\ell_r$ is reflexive, also $Z$ is, and therefore $S \in \calW(Y,Z)$. This proves that $\calW_\pr =\calW_\pr \circ \calW$. Now, using this, we have
\[
\bs w_\pr = \calW_\pr (\bs b)= (\calW_\pr \circ \calW)(\bs b)= \calW_\pr (\calW (\bs b))=\calW_\pr (\bs w). \qedhere
\]
\end{proof}

\begin{rem}\label{rem4.6}
We do not know whether Proposition \ref{prop4.5} holds in the ``limit'' case $r=1$, i.e., for $\calW_{(p,1)}$. It does not hold in the other ``limit'' case $p=1$, $r=\infty$, i.e., for $\calW_1= \calW_{(1, \infty)}$. Indeed, as we saw above, $\calW_1(c_0, c_0)= \calL(c_0, c_0)$, and hence 
\[
\bs w _1(c_0)= \calW_1 (\bs b)(c_0)= \bs b (c_0) \neq \bs w (c_0)= \calW_1(\bs w).
\]
In particular, $\calW_{(1,\infty)} \not \subset \calW$. In all other cases $\calW_\pr \subset \calW$. For $r\neq 1$, this is clear from Proposition \ref{prop4.5}. But $\calW_{(p,1)} \subset \calW_\pr$ (by the definition of $\calW_{(p,\cdot)}$, because $B_{\ell_1}\subset B_{\ell_r}$).
\end{rem}

\begin{rem}\label{rem4.7}
In the case $p=1$, $1\leq r \leq \past$, including also the case $p=1$, $r=\infty$ (cf. Remark \ref{rem4.6}), Proposition \ref{prop4.5} holds in a strong form for a large class of Banach spaces $X$. Namely, for $X$ that does not contain $c_0$ isomorphically. In this case (and only in this case), $\ell_1 ^w (X)=\ell_1^u(X)$, by the classical Bessaga--Pe{\l}czy\'{n}ski theorem \cite[Theorem~5]{BP} (see, e.g., \cite[8.3]{DF}). Therefore (see Section 4.2),
\[
\calW_{(1,r)}(Y,X)=\calU_{(1,r)}(Y,X)=(\calK\circ \calU_{(1,r)}\circ \calK)(Y,X)
\]
for all Banach spaces $Y$, and
\[
\bs w_{(1,r)}(X)= \bs u_{(1,r)}(X) =\calU_{(1,r)}(\bs k)(X)= \calN_{(\infty, \infty, \rast)}(\bs k)(X).
\]
\end{rem}

Keeping in mind that the operator ideal $\calW_\pr$ is surjective (see Section 4.1) we come to an omnibus characterization of weakly $\pr$-null sequences.

\begin{thm}\label{thm4.8}
Let $1\leq p < \infty$ and $1 < r \leq \past$ with $r< \infty$ if $p=1$. For a sequence $(x_n)$ in a Banach space $X$ the following statements are equivalent:
\begin{enumerate}
\item $(x_n)$ is weakly $(p,r)$-null,
\item $(x_n)$ is weakly null and relatively weakly $(p,r)$-compact,
\item $(x_n)$ is weakly null and weakly $\calW_\pr$-compact,
\item $(x_n)$ is weakly $\calW_\pr$-null,
\item $(x_n)$ is uniformly weakly $(p,r)$-null.
\end{enumerate}
\end{thm}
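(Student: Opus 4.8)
The plan is to mimic the proof of Theorem~\ref{omni}, replacing everywhere the norm topology by the weak topology, $\ell_p(X)$ by $\ell_p^w(X)$, $\bs k$ by $\bs w$, $\calK_\pr$ by $\calW_\pr$, and the Lassalle--Turco Theorem~\ref{LT} by Proposition~\ref{prop4.4}. So I would set up the cycle
\[
(a)\Rightarrow(b)\Rightarrow(c)\Leftrightarrow(d)\Rightarrow(e)\Rightarrow(a).
\]

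First, for $(a)\Rightarrow(b)$, I would argue exactly as in the computation in the proof of Theorem~\ref{omni_uncond}: if $(x_n)$ is weakly $\pr$-null, then fixing $\xast\in\Xast$ and $\e>0$ gives $(z_k)\in\ell_p^w(X)$ and $N$ with $|\xast(x_n)|\le\e$ for $n\ge N$, so $\xast(x_n)\to 0$, i.e.\ $(x_n)$ is weakly null. For relative weak $\pr$-compactness, fix one admissible $\e$ (say $\e=1$) with associated $(z_k)\in\ell_p^w(X)$ and $N$ such that $x_n\in\pr\textnormal{-conv}(z_k)$ for all $n\ge N$; splice the finitely many initial terms $x_1,\dots,x_{N-1}$ in front of $(z_k)$ as in Theorem~\ref{omni} to obtain $(y_k)\in\ell_p^w(X)$ with $x_n\in\pr\textnormal{-conv}(y_k)$ for all $n$ — here I must check that prepending finitely many vectors to a weakly $p$-summable sequence keeps it weakly $p$-summable, which is immediate, and that $x_n$ still lies in the convex hull of the shifted sequence, which works since $\ell_r$ contains the shift of $B_{\ell_r}$. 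Thus $(x_n)$ is relatively weakly $\pr$-compact, giving $(b)$.

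Next, $(b)\Leftrightarrow(c)$ is Proposition~\ref{prop4.5}: a set is relatively weakly $\pr$-compact iff it is of type $\calW_\pr(\bs b)=\bs w_\pr=\calW_\pr(\bs w)$, i.e.\ weakly $\calW_\pr$-compact (this is where the hypotheses $1<r\le\past$, with $r<\infty$ if $p=1$, enter, via Proposition~\ref{prop4.5}). Then $(c)\Leftrightarrow(d)$ follows from Proposition~\ref{prop4.4} together with the surjectivity of $\calW_\pr$ (Section~4.1): the ``in particular'' clause of Proposition~\ref{prop4.4} says precisely that for surjective $\calA$, a sequence is weakly $\calA$-null iff it is weakly null and weakly $\calA$-compact. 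For $(d)\Rightarrow(e)$ I would run the verbatim analogue of $(f)\Rightarrow(g)$ from Theorem~\ref{omni}: write $x_n=Ty_n$ with $(y_n)$ weakly null in $Y$ and $T\in\calW_\pr(Y,X)$; the weak $\pr$-compactness of $T$ yields $(w_k)\in\ell_p^w(X)$ with $T(B_Y)\subset\pr\textnormal{-conv}(w_k)$; normalize to $(z_k):=(w_k/\norm{(w_k)}_p^w)\in B_{\ell_p^w(X)}$; then for each $\xast\in\Xast$ and $\e>0$, since $\xast\circ T\in\Yast$ and $(y_n)$ is weakly null, $\xast(Ty_n)\to 0$, so there is $N$ with $|\xast(x_n)|\le\e$ for $n\ge N$, while $x_n\in\pr\textnormal{-conv}(w_k)=\norm{(w_k)}_p^w\,\pr\textnormal{-conv}(z_k)$ always holds. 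Since $\norm{(w_k)}_p^w\,\pr\textnormal{-conv}(z_k)$ does not depend on $\xast$, this exhibits a single $(z_k)$ witnessing the uniform condition after absorbing the scalar; thus $(e)$ holds. Finally $(e)\Rightarrow(a)$ is trivial, since a witness $(z_k)$ for the uniform version is a fortiori a witness for the non-uniform one.

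The main obstacle, as in the preceding sections, is not any single step but making sure that all the ``verbatim'' replacements are legitimate: concretely, (i) that the weak-summability class $\ell_p^w(X)$ is stable under the finite-splicing and normalization operations used, (ii) that the operator $\Phi_{(x_k)}$ for $(x_k)\in\ell_p^w(X)$ is still well-defined into $X$ (true, and noted in Section~4.1) even though it need not be compact, so that $\pr\textnormal{-conv}(x_k)=\Phi_{(x_k)}(B_{\ell_r})$ remains a bounded, indeed relatively weakly compact (by Proposition~\ref{prop4.5}), set, and (iii) that the passage to the weak topology in Proposition~\ref{prop4.4}(b) — the homeomorphism $\overline T\colon q(K)\to T(K)$ on weakly compact sets — is exactly what replaces the role of the Carl--Stephani Theorem~\ref{LT} here. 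Once these are in place the argument is a routine transcription of the proof of Theorem~\ref{omni}, and I would simply remark in the write-up that it is ``the verbatim weak version'' of that proof, spelling out only the computation showing $(x_n)$ is weakly null and the $(d)\Rightarrow(e)$ normalization step, pointing to Proposition~\ref{prop4.4} and Proposition~\ref{prop4.5} for the substantive inputs.
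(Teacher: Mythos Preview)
Your overall strategy matches the paper's proof exactly: the same cycle $(a)\Rightarrow(b)\Rightarrow(c)\Leftrightarrow(d)\Rightarrow(e)\Rightarrow(a)$, with $(b)\Leftrightarrow(c)$ via Proposition~\ref{prop4.5} and $(c)\Leftrightarrow(d)$ via Proposition~\ref{prop4.4} together with the surjectivity of $\calW_\pr$. The splicing argument for $(a)\Rightarrow(b)$ and the trivial $(e)\Rightarrow(a)$ are also identical.

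There is, however, a small but genuine slip in your $(d)\Rightarrow(e)$. You write that ``$x_n\in\pr\textnormal{-conv}(w_k)$ always holds'', but this follows from $T(B_Y)\subset\pr\textnormal{-conv}(w_k)$ only when $y_n\in B_Y$, which is not guaranteed. In Theorem~\ref{omni}'s $(f)\Rightarrow(g)$ this was handled by using that $(y_n)$ is \emph{norm}-null, so eventually $y_n\in\e_0 B_Y$; here $(y_n)$ is only weakly null and that argument is unavailable. The paper fixes this differently and more simply: weakly null sequences are bounded, so there is $M>0$ with $\norm{y_n}\le M$ for all $n$, whence $x_n\in M\,\pr\textnormal{-conv}(w_k)=\pr\textnormal{-conv}(Mw_k)$ for \emph{all} $n$; take $(z_k):=(Mw_k)\in\ell_p^w(X)$ as the witness, and then the weak nullness of $(x_n)$ (already established) gives the $|\xast(x_n)|\le\e$ part directly. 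Your normalization to $B_{\ell_p^w(X)}$ is unnecessary --- the definition of uniformly weakly $\pr$-null does not require the witness sequence to lie in the unit ball --- and is a vestige of transcribing Theorem~\ref{omni} too literally. Once you replace the normalization step by the scaling-by-$M$ step, your proof coincides with the paper's.
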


\begin{proof}
(a)$\Rightarrow$(b) It is clear from the definition that $x_n\ra 0$ weakly. Also, by the definition, we have (fixing, e.g., $\e=1$) $N\in \N$ and $(z_k)\in \ell_p^w(X)$ such that $\{x_N, x_{N+1}, ...\}\subset(p,r)$-conv$(z_k)$. Continuing verbatim to the proof of Theorem \ref{omni}, the second part of (a)$\Rightarrow $(b), we see that $(x_n)$ is relatively weakly $\pr$-compact.

Implications (b)$\Leftrightarrow $(c) and (c)$\Leftrightarrow$(d) are immediate from Propositions \ref{prop4.5} and \ref{prop4.4}, respectively.

To prove that (d)$\Rightarrow $(e), let $(x_n)$ be a weakly $\calW_{(p,r)}$-null sequence. Then there are a weakly null sequence $(y_n)$ in a Banach space $Y$ and an operator $T \in \calW_\pr (Y,X)$ such that $x_n= Ty_n$ for all $n\in \N$. The weak $\pr$-compactness of $T$ gives us a sequence $(w_k) \in \ell_p^w(X)$ such that $T(B_Y)\subset \pr$-conv$(w_k)$. We also have an $M>0$ such that $\norm{y_n} \leq M$ for all $n\in \N$. Now $(z_k):=(M w_k )\in \ell_p(X)$ and $x_n\in \pr\textnormal{-conv}(z_k)$ for all $n\in \N$. As $(x_n)$ is weakly null in $X$, for every $\xast \in\Xast$ and $\e>0$ there exists $N\in \N$ such that $\vert \xast(x_n)\vert \leq \e$ for all $n\geq N$. Hence, $(x_n)$ is uniformly weakly $\pr$-null.

The implication (e)$\Rightarrow$(a) is clear from the definitions.
\end{proof}

\begin{rem}
As we saw, all implications of Theorem \ref{thm4.8}, except (b)$\Rightarrow $(c), also hold in the ``limit'' cases $r=1$ and $p=1$, $r=\infty$. In the proof, we used that the implication (b)$\Rightarrow $(c) is immediate from Proposition \ref{prop4.5} (see also Remark \ref{rem4.6}). We do not know whether Theorem \ref{thm4.8} holds in these cases. If $p=1$ and $1\leq r \leq \past$, Theorem \ref{thm4.8} holds in a stronger form for those Banach spaces $X$ that do not contain $c_0$ isomorphically. Indeed, by Remark \ref{rem4.7}, in condition (b), ``weakly $(1,r)$-compact'' is the same as ``unconditionally $(1,r)$-compact'' and in condition (c) ``weakly $\calW_{(1,r)}$-compact'' is the same as ``$\calU_{(1,r)}$-compact'' and also the same as ``$\calN_{(\infty, \infty, \rast)}$-compact''. In condition (d), ``weakly $\calW_{(1,r)}$-null'' is the same as ``weakly $\calU_{(1,r)}\circ \calK$-null'', which is the same as ``$\calU_{(1,r)}$-null'', since compact operators take weakly null sequences to null sequences, i.e., $\calK\subset \calV$ (see, e.g., \cite[1.11.4]{P}). This shows that in the special case when $p=1$, $1\leq r \leq \past$, and $X$ does not contain $c_0$ isomorphically, all conditions of Theorem \ref{omni_uncond} are equivalent to the conditions of Theorem \ref{thm4.8}.
\end{rem}
\end{section}

\bigskip


\begin{thebibliography}{DFJP}
\bibliographystyle{alpha}

\bibitem{ALO} K. Ain, R. Lillemets, E. Oja, {\em Compact operators which are defined by $\ell_p$-spaces}, Quaest. Math. {\bf 35} (2012) 145--159.

\bibitem{AO1} K. Ain, E. Oja, {\em A description of relatively $(p,r)$-compact sets}, Acta Comment. Univ. Tartu. Math. {\bf 16} (2012)  227--232.

\bibitem{AMR} R. Aron, M. Maestre, P. Rueda, {\em $p$-Compact holomorphic mappings}, RACSAM {\bf 104} (2010) 353--364.

\bibitem{BP} C. Bessaga, A. Pe{\l}czy{\'n}ski, {\em On bases and unconditional convergence of series in {B}anach spaces}, Studia Math. {\bf 17} (1958) 151--164.

\bibitem{BCFP} G. Botelho, D. Cariello, V.V. F{\'a}varo, D. Pellegrino, {\em Maximal spaceability in sequence spaces}, Linear Algebra Appl. {\bf 437} (2012) 2978--2985.

\bibitem{BR} J. Bourgain, O. Reinov, {\em On the approximation properties for the space $H^\infty$}, Math. Nachr. {\bf 122} (1985) 19--27.

\bibitem{CS} B. Carl, I. Stephani, {\em On $A$-compact operators, generalized entropy numbers and entropy ideals}, Math. Nachr. {\bf 199} (1984) 77--95.

\bibitem{CSa} J.M.F. Castillo, F. Sanchez, {\em Dunford--{P}ettis-like properties of continuous vector function spaces}, Rev. Mat. Univ. Complut. Madrid {\bf 6} (1993) 43--59.

\bibitem{CK} Y.S. Choi, J.M. Kim, {\em The dual space of $(\calL(X,Y ), \tau_p)$ and the $p$-approximation property}, J. Funct. Anal. {\bf 259} (2010) 2437--2454.

\bibitem{DF} A. Defant, K. Floret, {\em Tensor Norms and Operator Ideals}, North-Holland Publishing Co., Amsterdam, 1993.

\bibitem{DOPS} J.M. Delgado, E. Oja, C. Pi\~{n}eiro, E. Serrano, {\em The $p$-approximation property in terms of density of finite rank operators}, J. Math. Anal. Appl. {\bf 354} (2009) 159--164.

\bibitem{DPS1} J.M. Delgado, C. Pi\~{n}eiro, E. Serrano, {\em Operators whose adjoints are quasi $p$-nuclear}, Studia Math. {\bf 197} (2010) 291--304.

\bibitem{DPS2} J.M. Delgado, C. Pi\~{n}eiro, E. Serrano, {\em Density of finite rank operators in the Banach space of $p$-compact operators}, J. Math. Anal. Appl. {\bf 370} (2010) 498--505.

\bibitem{DJP} J. Diestel, H. Jarchow, A. Pietsch, {\em Operator ideals},  in: W.B.~Johnson and J.~Lindenstrauss (eds.), Handbook of the Geometry of Banach Spaces, Vol. 1, pp. 437--496, Elsevier, Amsterdam, 2001. 

\bibitem{DJT} J. Diestel, H. Jarchow, A. Tonge, {\em Absolutely Summing Operators}, Cambridge Univ. Press, Cambridge, 1995.

\bibitem{FS1} J. Fourie, J. Swart, {\em Banach ideals of $p$-compact operators}, Manuscripta Math. {\bf 26} (1979) 349--362.

\bibitem{FS2} J. Fourie, J. Swart, {\em Tensor products and {B}anach ideals of {$p$}-compact operators}, Manuscripta Math. {\bf 35} (1981) 343--351.

\bibitem{GLT} D. Galicer, S. Lassalle, P. Turco, {\em The ideal of $p$-compact operators: a tensor product approach}, Studia Math. {\bf 211} (2012) 269--286.

\bibitem{LT} S. Lassalle, P. Turco, {\em The Banach ideal of $\calA$-compact operators and related approximation properties}, J. Funct. Anal. {\bf 265} (2013) 2452--2464.

\bibitem{O-JM} E. Oja, {\em A remark on the approximation of $p$-compact operators by finite-rank operators}, J. Math. Anal. Appl. {\bf 387} (2012) 949--952.

\bibitem{O-JF} E. Oja, {\em Grothendieck's nuclear operator theorem revisited with an application to $p$-null sequences}, J. Funct. Anal. {\bf 263} (2012) 2876--2892.

\bibitem{P} A. Pietsch, {\em Operator Ideals}, Deutsch. Verlag Wiss., Berlin, 1978; North-Holland Publishing Company, Amsterdam-New York-Oxford, 1980.

\bibitem{P2} A. Pietsch, {\em The ideal of $p$-compact operators and its maximal hull}, Proc. Amer. Math. Soc. {\bf 142} (2014) 519--530.

\bibitem{PD} C. Pi\~{n}eiro, J.M. Delgado, {\em $p$-Convergent sequences and Banach spaces in which $p$-compact sets are $q$-compact}, Proc. Amer. Math. Soc. {\bf 139} (2011) 957--967.

\bibitem{R1984} O. Reinov, {\em A survey of some results in connection with Grothendieck approximation property}, Math. Nachr. {\bf 119} (1984) 257--264.

\bibitem{Ry} R.A. Ryan, {\em Introduction to Tensor Products of Banach Spaces}, Springer Monographs in Mathematics, Springer-Verlag, London, 2002.

\bibitem{SK1} D.P. Sinha, A.K. Karn, {\em Compact operators whose adjoints factor through subspaces of $\ell_p$}, Studia Math. {\bf 150} (2002) 17--33.

\bibitem{SK2} D.P. Sinha, A.K. Karn, {\em Compact operators which factor through subspaces of $\ell_p$}, Math. Nachr. {\bf 281} (2008) 412--423.

\bibitem{S72} I. Stephani, {\em Surjektive Operatorenideale {\"u}ber der Gesamtheit aller Banachr{\"a}ume}, Wiss. Z. Friedrich-Schiller-Univ. Jena {\bf 21} (1972) 187--206.

\bibitem{S73} I. Stephani, {\em Surjektive Operatorenideale {\"u}ber der Gesamtheit aller Banachr{\"a}ume und ihre Erzeugung}, Beitr{\"a}ge Anal. {\bf 5} (1973) 75--89.

\bibitem{S3} I. Stephani, {\em Generating systems of sets and quotients of surjective operator ideals}, Math. Nachr.  {\bf 99}  (1980), 13--27.

\end{thebibliography}
\end{document}